\numberwithin{equation}{section}
\theoremstyle{plain}
\newcommand{\arginf}{\mathop{\mathrm{arg\,inf}}}
\numberwithin{equation}{section}
\theoremstyle{plain}
\newtheorem{lemma}{Lemma}[section]
\newtheorem{proposition}{Proposition}
\newtheorem{remark}{Remark}
\newtheorem{theorem}{Theorem}
\begin{document}

\begin{frontmatter}
\title{Non-asymptotic model selection for  linear non least-squares estimation
in regression models and inverse problems
}
\runtitle{Model selection for non-least squares linear estimation}

\begin{aug}
\author{\fnms{Ikhlef} \snm{Bechar}\thanksref{t1,t2}\ead[label=e1]{ikhlef.bechar@cardiov.ox.ac.uk}
\ead[label=e2]{ikhlef.bechar@sophia.inria.fr}}




\thankstext{t1}{EPSRC Postdoctoral Research Grant, Oxford University}
\thankstext{t2}{Pulsar project, INRIA Sophia Antiplois}
\runauthor{I. Bechar}


\runauthor{I. Bechar}

\affiliation{Oxford University \& INRIA Nice Sophia Antipolis}

\address{ OCMR / FMRIB  \\
Oxford University\\
John Radcliffe Hospital\\
Oxford, OX3 9DU \\
United Kingdom \\
\printead{e1}\\
\phantom{E-mail: ikhlef.bechar@cardiov.ox.ac.uk\ } }

\address{Projet Pulsar \\
 INRIA Sophia Antipolis \\
Route des Lucioles - BP 93 \\
06902 Sophia Antipolis Cedex \\
FRANCE \\
\printead{e2}\\
\phantom{E-mail: ikhlef.bechar@sophia.inria.fr \ } }
\end{aug}

\begin{abstract}
We propose to address the common problem of linear estimation in  linear statistical
models by using a model selection approach via penalization. Depending then on the
framework in which the linear statistical model is considered namely
the regression framework or the inverse problem framework, a data-driven model
selection criterion is obtained either under general assumptions, or under the mild
assumption of model identifiability respectively. The proposed approach was stimulated
by the important recent non-asymptotic model selection results due to Birg\'e and Massart mainly
\cite{birge_massart_2007}, and our results in this paper, like theirs,
are non-asymptotic and turn to be sharp.

Our main contribution in this paper resides in the fact that these linear estimators
are not necessarily least-squares estimators but can be any linear estimators.
The proposed approach finds therefore potential applications in countless
fields of engineering and applied science (image science, signal processing,
applied statistics, coding, to name a few) in which one is interested in
recovering some unknown vector quantity of interest as the one, for example,
which achieves the best trade-off between a term of fidelity to data,
and a term of regularity or/and parsimony of the solution. The proposed
approach provides then such applications with an interesting model 
selection framework that allows them to achieve such a goal.

\end{abstract}

\begin{keyword}[class=AMS]
\kwd{62G08}
\end{keyword}

\begin{keyword}
\kwd{Linear statistical model}
\kwd{Regression framework}
\kwd{Inverse problem framework}
\kwd{Regularity/parsimony priors}
\kwd{Non least-squares estimators}
\kwd{Non asymptotic Model selection}
\kwd{Penalized criterion}
\kwd{Oracle inequalities}
\end{keyword}
\end{frontmatter}

\section*{Summary of the main results}

Let us fix first some of the notations that we shall use 
in the sequel. So, we consider the problem estimation of a
vector quantity $\beta = (\beta)_{k=1,p}$ lying in some
$p-$dimensional Euclidean subspace of $\mathbf{R}^p$
(though our results extend easily to infinite dimensional Hilbert spaces)
endowed with the traditional Euclidean norm $\|\cdot\|$ defined as follows
\begin{equation}
\|\mu\|^2 = \sum_{k=1}^p \mu_k^2, \, \forall \mu =(\mu_k)_{k=1,p} \in \mathbf{R}^p \nonumber
\end{equation}
We shall also use the same notation, i.e., $\|A\|$ to mean the Euclidean norm of
any $q$ by $q$ real matrix $A$ for some $q \in \mathbb{N}$, i.e.,
\begin{equation}
\|A\|^2 = \sum_{i=1}^q \sum_{j=1}^q a_{i,j}^2,  \,  \forall \, A=(a_{ij})_{i=1,q; j=1,q}, \,\, a_{ij} \in \mathbf{R},  \, \, \forall  \, i,j=1,\cdots,q \nonumber
\end{equation}
Sometimes, we use for some $q \in \mathbb{N}$ the notation  $I_q$ to mean the $q$ by $q$
identity matrix, and the notation $D\{\eta_k\}_{k=1,q}$ to mean
a $q$ by $q$ diagonal matrix with diagonal elements $\eta_k, k=1,\cdots,q$.
When we write for some matrix $A$ the following $A^{\dag}$, we mean
the Moore-Penrose pseudo-inverse of matrix $A$.

With this being said, we consider the classical linear gaussian regression model $y= X\beta + R z$,
and we assume that one is given a collection of linear estimators of the
solution $\beta$ as follows $\mathcal{L} = \{\hat{\beta}_{m}:= \Psi_m y, m\in\mathcal{M}\}$
parameterized by some set of models (parameters) $\mathcal{M}$, that we
consider finite in this paper though our results generalize easily
to the case when $\mathcal{M}$ is countable-infinite. Such a collection of
linear estimators can be obtained for instance by considering one (or more)
of the frameworks presented in section \ref{sec1}. The goal is then
to select among such a finite collection of linear estimators one estimator
of $\beta$ with the lowest quadratic risk (our results extend easily to other
performance measures such as the weighted quadratic risk or the Mahalanobis risk).
To do this, we firstly assume that when matrix $X$ is rank-deficient, the solution of interest
$\beta$ is linearly-identifiable in the linear model above; which means that one knows
a-priori some $p$ by $n$ real matrix $\mathcal{K}$ such that one can write
$\beta - \mathcal{K} X \beta =0$ \footnote{When $X$ is of full rank, then
one takes $\mathcal{K} := (X^T X)^{-1} X^T$.}, and we adopt a model 
selection procedure from a non-asymptotic point of view via penalization. Thus,
we propose to select the estimator of $\beta$ by minimizing over $\mathcal{M}$
a penalized criterion of the form
\begin{equation}
\nonumber
\textrm{Crit}(m) = y^T \Big ( \Psi_m^T \Psi_m  - 2 \mathcal{K}^T  \Psi_m) y + \textrm{pen}(m)
\end{equation}
where $\textrm{pen}(m), m \in \mathcal{M}$ stands for some penalty function that depends
exclusively upon a given model $m$ through matrix $\Psi_m$ but not upon data that
we propose to address in the remainder of this paper, and we refer to the estimator
denoted by $\tilde{\beta} = \hat{\beta}_{\tilde{m}}$ with $\tilde{m} = \arginf_{m\in\mathcal{M}} \textrm{Crit}(m)$
as the penalized estimator of $\beta$. Then, we show that when the size of the
collection of estimators $\mathcal{L}$ is not too
great, one can set the penalty $\textrm{pen}(m)$ in such a way to enforce an oracle inequality of the form
\begin{equation}
\mathbb{E}\Big[\|\tilde{\beta} - {\beta} \|^2\Big] \leq K \min_{m\in\mathcal{M}}\mathbb{E}\Big[\|\hat{\beta}_m - {\beta} \|^2\Big] + C  \nonumber
\end{equation}
for some universal additive constant $C$ and some multiplicative constant $K$
which depends on the complexity of the model collection $\mathcal{M}$. \\
More generally, especially when the size of the collection $\mathcal{L}$ can be great,
then we show that another choice of  $\textrm{pen}(m)$ that takes into account the
complexity of each model with respect to the collection of models $\mathcal{M}$--in a sense
that we shall specify in the sequel-- warrants to obtain a sharp inequality of the form
\begin{equation}
\mathbb{E}\Big[\|\tilde{\beta} - {\beta} \|^2\Big] \leq K \min_{m\in\mathcal{M}}\Big\{\mathbb{E}\Big[\|\hat{\beta}_m - {\beta} \|^2\Big] +  T_m \Big\} + C  \nonumber
\end{equation}
with $K$ and $C$ standing for some reasonable universal constants, and  $T_m$
stands for some reasonable per-model positive quantity which is related to
the complexity of a given model $m$ with respect to the model
collection $\mathcal{M}$.

\section{Introduction}
\label{intro}
We assume the correlated linear statistical model
\begin{equation}
\label{model}
  y =  X \beta +  R  z
\end{equation}
where $y$ is a $n$-dimensional vector of observations,
$\beta$ is the $p-$dimensional vector parameter of
interest, $X$ and $ R$ are $n$ by $p$ and $n$ by $n$
design (known) matrices respectively, and we take $z$ to
be a $p-$dimensional vector of independent and identically distributed (i.i.d.)
random variables $N(0,1)$. We are then interested in estimating the vector $\beta$
(resp. the response $X\beta$) given a single realization of the vector $y$
by adopting a model selection approach via penalization.

We propose to address such an estimation problem under rather general assumptions about
the model parameters $X$, $R$, $n$ and $p$ and the solution $\beta$. So, we regard $\beta$
as an unknown deterministic vector quantity, and we assume that matrix $R$ is a general
$n$ by $n$ matrix, matrix  $X$ can be a well or an ill-conditioned  matrix, the number of the observations (n) can be greater,
equal or smaller than the size of the vector $\beta$ (p), which therefore includes the case known as the
"$n<p$" set up where on seeks to recover high resolution or sparse vector quantities
$\beta$ by using only a few  measurements \cite{donoho_2006, candes_plan_2009}.

Our results in this paper are non-asymptotic; this means that we propose to work with the
values of the parameters $R$, $n$ and $p$ of the linear model as they are, and we allow
the number of models of the solution $\beta$ to depend freely upon the of the solution ($p$). This viewpoint as
initiated in model selection by Barron, Birg\'e and Massart \cite{barron_birge_massart_1999} then refined by Birg\'e and Massart
\cite{birge_massart_2001, birge_massart_2007}, needs to be contrasted with the
asymptotic point of view \cite{akaike_1973, mallows_1973, schwartz_1978, shibata_1981, johnstone_1999} which
considers for example that the number of the observations goes to infinity or that the noise magnitude
goes to zero while the number of models remains fix. As we shall see in the remainder, useful model selection criteria are
directly connected to the complexity of a family of models, and by the latter, we roughly mean how
large a model collection is; compared with  the size ($p$) of the vector $\beta$ \footnote{It is interesting to note that we do not
say with ($n$) because, anyway, $p$ and $n$ are related since they have
to satisfy generally that $n \succeq O(\frac{S}{\log[p/n]})$ where $S$
stands for the maximum number of the non-zero components of the vector
$\beta$ or of its coefficients with respect to some fixed orthonormal basis,
otherwise it is not generally possible to recover efficiently $\beta$ even when
the latter is some high-resolution signal\cite{candes_tao_2007b}.}.
Such a non-asymptotic property turns indeed to be very precious in many practical applications,
mainly those which seek to recover a given vector quantity of interest by using a large library
of models. We refer the interested reader to \cite{birge_massart_2007, massart_2007} for
a more thorough discussion on the topic.

Non-asymptotic model selection by using penalization has become
an important trend in statistical estimation in regression, and 
will certainly continue in fascinating  many researchers either 
in statistics or in the engineering field and applied science for a long time. 
Early works in the field appeared indeed in the early nineties due to Barron and
colleague \cite{barron_1991} for discrete models, and extensions to continuous models 
were proposed by Barron, Birg\'e and Massart \cite{barron_birge_massart_1999}. Aware 
of the pioneering works of Talagrand on concentration inequalities \cite{talagrand_1996} \footnote{We refer the dear reader to \cite{massart_2000, massart_2007} for two beautiful lectures on the topic of concentration inequalities and 
their application in model selection.}, Birg\'e and Massart \cite{birge_massart_2001,  birge_massart_2007} improved on Talgrand's works
and refined the model selection approach in \cite{barron_birge_massart_1999} 
and addressed nicely the problem of the estimation of the mean of a gaussian process 
in homoscedastic regression models when the variance is known (or estimated off-line), 
while taking into consideration the richness (complexity) of a collection of models,
and which leads in some cases (when the model collection is not too big)
to sharp oracle inequalities. Baraud \cite{baraud_2000, baraud_2002} and Baraud
and colleagues  \cite{baraud_compte_viennet_2001}  proposed many extensions to
the aforementioned works that generalize the approach to non-gaussian homoscedastic
statistical models; and under some mild assumptions about noise moments, amazingly
they obtained near-oracle results. Baraud, Giraud and Huet \cite{baraud_giraud_huet_2009} proposed recently
penalized model selection criteria that are able to estimate the mean of gaussian homoscedastic models
even when the variance is unknown, and they proved results for both the quadratic
risk and the Kullbak risk. Very recently, Gendre \cite{gendre_2008, gendre_2009} extended
their results in the case of a simultaneous estimation of the mean and the variance
in gaussian heteroscedastic models. The latter work is probably the closest to our
present work since it can handle heteroscedastic data as well, however, 
a subtle nuance exists between the two methods. Indeed while our method works with 
any linear estimators, the method of the author is based solely on least-squares estimators; which means that for
each model $m$ of the model collection (assumed to be some family of subspaces to which one can associate orthonormal bases),
an estimator is constructed as the closest $p-$dimensional real vector to data (in the sense of the
squared error) by assuming that current model $m$ is true. Though we have to admit
that such a work constitutes a pretty important contribution to the field of model
selection in correlated regression models, the approach of the author might suffer
for yielding the expected results for some instances of the vector $\beta$ and of the
noise matrix $R$ especially when $\beta$ has many nonzero coefficients
with respect to any basis among the family of bases of the solution,
and/or matrix $R$ plays a preponderant role in the formula of the quadratic risk.
However, when such a \emph{least-squares} restriction is relaxed to allow
the construction of non least-squares linear estimators of the solution
(which is the case here), interestingly, strong priors about the sought solution
$\beta$ can be inserted in a plenty of ways into its linear estimators
(for instance, as the best trade-off between fidelity to data and regularity),
consequently, one might limit considerably the influence  of matrix $R$ on the recovery process\footnote{We would like then
to point out that this paper is devoted to linear estimation by model selection
in a broad context, and another paper \cite{bechar_2009a} which deals specifically
with the problem of construction of non least-squares estimators that
can take into account to some extent data heteroscedasticity in the goal
of achieving more interesting balances between the bias and the variance terms
in the formula of the quadratic risk is currently in the writing
process by the author in the spirit of the present work.}.
Interestingly, this finds countless applications  in the fields of
engineering and applied science--witness the broad success
of Bayesian regularization frameworks in signal and image restoration.
We would not close this rather modest overview of related works without
probably pointing out one thriving field of non-asymptotic estimation
in regression pioneered mainly by Cand\`es and collaborators \cite{candes_tao_2007, candes_tao_2007b, candes_watkin_boyd_2008, candes_plan_2009} and currently explored by many research groups in statistics and
engineering disciplines (image and signal processing, information and coding theory, etc.)
and which is known as sparse statistical estimation or compressive sensing. In the latter,
one tries to recover some vector quantity of interest assumed to be sparse or
admits a parsimonious representation in a fixed orthonormal basis
by considering criteria based on the $\ell_1$-norm minimization by using
linear programming concepts. The authors obtained consequently under some
minimal assumptions about the model near-oracle inequalities of
their estimation method.


Before describing in details the main model selection results in this
paper, we would like to open here a discussion to briefly point out
our point of view regarding which performance measure of an estimator of
$\beta$ is better suited for some application; for sake of making our
approach in this paper as much clear as possible, and for helping the
dear reader set up more easily his/her model selection framework. In fact,
one has to distinguish generally between the following two frameworks in
 which the linear statistical model (\ref{model}) can be considered which we briefly review:
\begin{enumerate}
\item {\bf The linear regression framework}: where the estimation of $X\beta$ is generally, though not
always\footnote{because one would also use the regression framework to select an estimator of $\beta$, but it is to use
with some care to avoid any bad situation of selecting an estimator $\hat{\beta}$ such that
$\mathbb{E}\big[\|X\hat{\beta} - X\beta\|^2\big] \approx 0$, but $\mathbb{E}\big[\|\hat{\beta}- \beta \|^2 \big]\gg 0$.},
the main concern for the statistician. Hence, the performance measure of any estimator $\hat{\beta}$ of $\beta$ which is used
in this case is the predictive risk given by $\mathbb{E} \big [ \| X\hat{\beta} - {X\beta} \|^2 \big]$.
\item {\bf The linear inverse problem framework}: where the estimation of $\beta$ is the main concern for the statistician.
Hence, the performance measure of any estimator  $\hat{\beta}$ of $\beta$ which is used in this case is the quadratic risk
given by $\mathbb{E} \big [ \| \hat{\beta} - {\beta} \|^2 \big]$.
\end{enumerate}
However, one shows easily that w.l.g., even to consider a collection of linear estimators
of $X\beta$ as follows
\begin{equation}
\nonumber
\{\widehat{X\beta}_{m}:=  X\hat{\beta}_{m} := X \Psi_m y, m\in\mathcal{M}\}
\end{equation}
and use as a figure of merit of any estimator $\widehat{X\beta} := X \hat{\beta}$ of
$X\beta$ its quadratic risk, which is simply the predictive risk of
the estimator $\hat{\beta}$ of ${\beta}$, one then finds oneself in the
presence of the model selection problem we stated earlier in this section. Consequently,
we shall address in the remainder the two problems, namely the linear regression problem
and the linear inverse problem, in a same framework. However, in contrast to the
regression set up, model identifiability might be an issue to consider in
the inverse problem set up to derive useful model selection procedures, so we shall also
provide some useful ideas that help overcome such an issue. \\
Regarding now the appropriateness of either performance measure of any estimator of $\beta$, it turns out indeed
that in many engineering domains, the vector quantity of interest that has direct application is $\beta$ and not $X\beta$
\cite{candes_tao_2007b}. In this case, using the quadratic risk as a figure of merit of an estimator of $\beta$
makes more sense than using the predictive risk. In image reconstruction/restoration
applications for instance, $\beta$ represents an image (e.g. an MRI scan of the heart or the brain of a
patient), $X$ then models the design matrix of the imaging device (e.g. the MRI scanner), and $Rz$
models the stochastic errors of the imaging device. Obviously, the vector quantity of interest in
this case is the image $\beta$ that one would need to reconstruct for further use (for cardiac or brain
diagnosis for example). However, when one is more interested in the estimation of the system response $X\beta$
than $\beta$ itself, then using the predictive risk as a figure of merit of an estimator sounds
more interesting in this case. As some illustrative examples, one can mention the variable selection problem
that we shall discuss in section \ref{sec1}, and the reconstruction problem (for filtering or recognition purposes,
etc.) by using a finite library of vector primitives. In the latter, one has generally an a-priori linear model of
some vector quantity of interest $u$ (a signal, an image, a curve, a shape, etc.) as follows $u = X\beta$,
where $\big \{X_k, k=1,\cdots,p\big\}$ stand for some set of vector primitives (predictors)
which, depending on the application, can be for example eigen objects (e.g. eigen images, eigen shapes, wavelets)
or simply some database of generic objects,  and $\beta$ stands for the vector of the coefficients
of the linear combination of such vector primitives. One's goal is then
to estimate the vector of the coefficients $\beta$ in such a way to achieve the
lowest predictive error. To cut a long story short, depending on the application, one may
find good reasons to prefer the use of one performance measure from
the use of its alternative (see for example subsection \ref{sub_eq} for another
reason that might justify the use of the predictive risk).

Having said this, the rest of the paper is organized as follows. In section \ref{sec1},
we set up our model selection framework and we describe some applications that
can be expressed in terms of our framework.  In section \ref{sec2}, we derive the
main data-driven model selection results in this paper, and we provide
some useful clues that help choose the penalty function and its parameters.
In section \ref{sec3}, we discuss some practical solutions
that can lead to overcome the identifiability issue of model (\ref{model}) when the
rank of matrix $X$ is smaller than $p$. Section \ref{sec5} is
devoted to the numerical experiments that show the performances of
the approach for some application examples. Finally,
a general discussion about the proposed approach and its future
extensions concludes this papers.

\section{Collection of linear estimators of $\beta$}
\label{sec1}
We assume that one is given typically a finite collection of linear
estimators of the solution $\beta$ which is parameterized by some finite
set of models (or parameters) $\mathcal{M}$ as follows
\begin{equation}
\label{lin_estimator}
\mathcal{L} = \{\hat{\beta}_{m}:= \Psi_m y, m\in\mathcal{M}\}
\end{equation}
with $\Psi_m$ standing for some $p$ by $n$ matrix for all $m\in\mathcal{M}$,
and the goal is to select among such a family of linear estimators $\big\{\hat{\beta}_{m}, m\in\mathcal{M}\big\}$
one estimator of $\beta$  with the lowest quadratic risk.

Concerning the way in which these linear estimators are constructed, this
depends generally on the application and on the prior that one has about
the solution $\beta$, and some examples of their construction encountered
in countless engineering domains (signal/image processing, computer
vision, applied statistics, to name a few) are highlighted below.

\subsection{Reconstruction/Restoration by regularization}
In many image and signal reconstruction/restoration applications,
for recovering some vector quantity of interest namely $\beta$
from an observation of model (\ref{model}), one proceeds
by solving an unconstrained quadratic problem of the form
\begin{equation}
\label{tikonov}
\Big(y - X\hat{\beta}\Big)^T P \Big(y- X\hat{\beta}\Big) + \hat{\beta}^T H \hat{\beta} \rightarrow \min_{\hat{\beta} \in \mathbf{R}^p} 
\end{equation}
where $P$ and $H$ stand for two given $n$ by $n$ and $p$ by $p$ symmetric
matrices respectively, both considered generally to be positive semi-definite.
The two competing terms $\Big(y - X\hat{\beta}\Big)^T P \Big(y- X\hat{\beta}\Big)$ and $\hat{\beta}^T  H \hat{\beta}$
in (\ref{tikonov}) in (\ref{tikonov}) are generally referred as the data fidelity term,
and the regularity term respectively. As an illustrative example, when $\beta$ is assumed to be
some smooth vector quantity, then $H$ is taken generally to be some
regularization operator (e.g. a high-pass filter such as a differential operator or any linear
combination of differential operators of different orders, a band-pass filter, etc.) in order to
enforce smoothness of the recovered solution, and $P$ is generally taken as the inverse of the
covariance matrix of the observations, i.e., $P = \big(R^TR\big)^{-1}$ or simply as the $n$ by $n$
identity matrix, i.e., $P = I_n$ . This is also known in applied science as the Tikhonov regularization
problem \cite{tikhonov_1977} and which has Bayesian interpretation (maximum of posterior $\log$-likelihood of data in a gaussian set up).
Solving now for $\hat{\beta}$ in (\ref{tikonov}) gives
\begin{equation}
 \hat{\beta} = \big( X^T P X + H \big )^{\dag} X^T P y \nonumber
\end{equation}
However, it is quite common that matrix $H$ and  probably matrix $P$ too
(useful if one wants for example to test the performance of the linear estimators
against various Mahalanobis distances between data and an estimator's response)
depend on some parameters (referred in engineering as regularization parameters) that are difficult to tune optimally
for a particular application. Therefore, one assumes a finite collection of parametric
couples of instances of the matrices $P$ and $H$ as follows $\{ \big( P_m, H_m\big), m \in \mathcal{M}\}$,
and by putting  for all $m \in \mathcal{M}$
\begin{equation}
\Psi_m := \Big( X^T P_m X + H_m \Big )^{\dag} X^T P_m \nonumber
\end{equation}
one obtains finally a finite parametric collection of linear estimators of $\beta$
of the form $\Big \{ \hat{\beta}_m := \Psi_m y, m \in \mathcal{M}\Big \}$. The goal is
then to select among such a parametric collection of linear estimators of $\beta$
one estimator with the lowest risk. One then finds oneself in the presence
of the model selection framework that we set up in the beginning of this section.

\subsection{Optimal representation in a family of orthonormal bases}
Some applications that attempt to estimate $\beta$ from an observation of
model (\ref{model}) assume that $\beta$ has a sparse representation in
a given family of orthonormal bases  in $\mathbf{R}^p$  of different complexities
(i.e., dimensions) $\Lambda = \{\Lambda_m, m \in \mathcal{M}\}$ (e.g. trigonometric
bases, a wavelet family, etc.), and let us denote by $\overline{\Phi}_m$ the matrix
which rows correspond to the respective vectors of the complement basis in $\mathbf{R}^p$
of the orthonormal basis ${\Lambda_m}$, for all  $m \in \mathcal{M}$. So, these
applications proceed indeed by solving for all $m \in \mathcal{M}$ a constrained
quadratic program of the form
\begin{equation}
\label{orth}
\Big(y - X\hat{\beta}_m\Big)^T P_m \Big(y- X\hat{\beta}_m\Big) \rightarrow \min_{\hat{\beta}_m/\overline{\Phi}_m \hat{\beta}_m = 0}
\end{equation}
where $P_m$ stands for a $n-$dimensional symmetric matrix. Let us put
\begin{equation}
C_m = (X^T P_m X + \overline{\Phi}_m^T \overline{\Phi}_m )^{\dag} \nonumber
\end{equation}
One checks that the optimal estimator $\hat{\beta}_m^*$ with respect to $m \in \mathcal{M}$
is given by
\begin{equation}
\hat{\beta}_m =  C_m \bigg[ I_p - \overline{\Phi}_m^T  \Big( \overline{\Phi}_m C_m \overline{\Phi}_m^T\Big)^{\dag} \overline{\Phi}_m C_m \bigg] X^T P_m y \nonumber
\end{equation}
moreover, if matrix $\big(X^T P_m X + \overline{\Phi}_m^T \overline{\Phi}_m\big)$ is of full rank (i.e., $p$),
then such an estimator is the unique solution of (\ref{orth}). Let us now put for all $m \in \mathcal{M}$
\begin{equation}
\nonumber
\Psi_m  :=  C_m \bigg[ I_p - \overline{\Phi}_m^T  \Big( \overline{\Phi}_m C_m \overline{\Phi}_m^T\Big)^{\dag} \overline{\Phi}_m C_m \bigg] X^T P_m
\end{equation}
hence one obtains a finite collection of linear estimators of $\beta$
which is given  by $\Big\{\hat{\beta}_m := \Psi_m y, m \in \mathcal{M}\Big\}$,
among which one wants to select one estimator with the lowest risk. One
retrieves again our model selection framework.

\begin{remark}
The formula of the solution $\hat{\beta}_m$ of (\ref{orth}) and all those that shall come up later
in this section remain valid for arbitrary matrices $P_m$ and $\overline{\Phi}_m$,
and it is not definitely necessary to assume that the row vectors of $\overline{\Phi}_m$
are orthonormal (hence when $\Phi_m$ is orthogonal, one may replace $\overline{\Phi}_m$ with
$I_p - \Phi_m^T \Phi_m$). Please note that one can approximate the formula
of $\hat{\beta}_m$ by a simpler formula for some large enough positive
number $\mu$ as follows $\hat{\beta}_m \approx (X^T P_m X + \mu \overline{\Phi}_m^T \overline{\Phi}_m )^{\dag} X^T P_m y$.
\end{remark}

\subsection{Optimal representation in a family of orthonormal bases with regularization}
It happens that the solution $\beta$ that one looks to estimate from
an observation of model (\ref{model}) has a sparse representation
in some family  $\Lambda $ of orthonormal bases of some subspaces in
$\mathbf{R}^p$ which we write as $\Lambda = \big\{ \Lambda_{m'} , m' \in \mathcal{M}^{orth} \big\}$
and has some regular (e.g. smooth) structure.  So, let us denote for all
$m' \in \mathcal{M}^{orth}$ by ${\Phi}_{m'}$ the matrix which rows correspond
to the respective vectors of the orthonormal basis ${\Lambda_{m'}}$, and
by $\overline{\Phi}_{m'}$  the matrix which rows correspond
to the respective vectors of the complement basis $\overline{\Lambda}_{m'}$
in $\mathbf{R}^p$ of ${\Lambda_{m'}}$ and put  $\overline{\Lambda} = \big\{ \overline{\Lambda}_{m'} , m' \in \mathcal{M}^{orth} \big\}$.
The goal is then to recover the solution of $\beta$ in the orthonormal family
$\Lambda$ as parsimoniously as possible, while  imposing either on $\beta$ or on the vector
of coefficients of $\beta$ in any orthonormal basis $ \Lambda_{m'}  \in \Lambda $
to have a regular profile (e.g. smoothness), but the latter
is generally difficult to guess beforehand. Hence, one would like
to consider on top of $\Lambda$ a finite parametric collection $\mathcal{H}$
of regularizing operators with increasing regularization powers as follows $\mathcal{H} = \big \{H_{\theta}, \theta \in \Theta \big\}$,
and depending on whether the smoothing is applied on $\beta$
directly or on its vector of coefficients with respect to
any  orthonormal basis $ \Lambda_{m'}  \in\Lambda$, an operator
$H_{\theta} \in \mathcal{H}$ may depend or not on a basis $ \Lambda_{m'}$.
Nonetheless, for the sake of simplicity and without
loss of generality (w.l.g.) (see remark \ref{remark0} below), one can consider
that for all $\theta \in \Theta$, $H_{\theta}$ is $p$ by $p$ symmetric matrix and that
one is given a family of models (quadruplets) of the form $\big\{(\Lambda_{m}, \overline{\Lambda}_{m},H_{m}, {P}_m),
m \in \mathcal{M}\big\}$ where for all $m\in \mathcal{M}$, $\Lambda_{m} \in \Lambda$, $\overline{\Lambda}_{m} \in \overline{\Lambda}$,
$H_{m} \in \mathcal{H}$, and ${P}_m$ is some $n$ by $n$ symmetric matrix which
enforces closeness of an estimator to data. Then with respect to all $m \in \mathcal{M}$,
one solves a constrained quadratic program of the form
\begin{equation}
\label{orth_reg}
\Big(y - X\hat{\beta}\Big)^T P_m \Big(y- X\hat{\beta}\Big) + \beta^T H_m \beta \rightarrow \min_{\hat{\beta}/\overline{\Phi}_m \hat{\beta} = 0} \nonumber
\end{equation}
Let us put for all $m \in \mathcal{M}$
\begin{equation}
C_m = (X^T P_m X + \overline{\Phi}_m^T \overline{\Phi}_m + H_m )^{\dag} \nonumber
\end{equation}
One checks (see the proof in the appendix section) that the optimal estimator $\hat{\beta}_m^*$ with respect to
$m \in \mathcal{M}$ is given by
\begin{equation}
\hat{\beta}_m =  C_m \bigg[ I_p - \overline{\Phi}_m^T  \Big( \overline{\Phi}_m C_m \overline{\Phi}_m^T\Big)^{\dag} \overline{\Phi}_m C_m \bigg] X^T P_m y \nonumber
\end{equation}
and one can approximate the latter for a large enough positive number $\mu$ as follows
\begin{equation}
\hat{\beta}_m \approx (X^T P_m X + H_m + \mu \overline{\Phi}_m^T \overline{\Phi}_m )^{\dag} X^T P_m  y \nonumber
\end{equation}
Let us now put for all $m \in \mathcal{M}$
\begin{equation}
\nonumber
\Psi_m := C_m \bigg[ I_p - \overline{\Phi}_m^T  \Big( \overline{\Phi}_m C_m \overline{\Phi}_m^T\Big)^{\dag} \overline{\Phi}_m C_m \bigg] X^T P_m \nonumber
\end{equation}
One obtains in the end the following finite collection of linear estimators of $\beta$: $\Big\{\hat{\beta}_m := \Psi_m y, m \in \mathcal{M}\Big\}$ among
which one would want to select one estimator with the lowest risk. One retrieves again our model
selection framework.
\begin{remark}
\label{remark0}
When the smoothness constraint is imposed rather on the coefficients
of an estimator $\hat{\beta}_m$ in a given orthonormal basis
$\Lambda_{m}  \in \Lambda $, one can proceed in the same way as
above by defining for all $m \in \mathcal{M}$ the new filter $H_m$
as follows  $ H_m := \Phi_m^T F_m \Phi_m$ with $F_m$ standing for
the filter which operates on the vector of the coefficients
of $\hat{\beta}_m$ with respect to $\Lambda_{m}$, and $ H_m$
standing for the new filter that operates directly on the
reconstructed solution $\hat{\beta}_m$.
\end{remark}

\subsection{Optimal linear filtering}
Linear filtering (low-pass, high-pass, band-pass, band-stop, etc.)
is an important pre-processing task in various signal and image processing
applications. For instance, low pass filtering (e.g. gaussian filtering)
has become a standard step of the image processing chain which aims
at removing white noise from a noisy image $y$ in order to improve
its quality (measured generally  as peak signal-to-noise ratio (PSNR))
and simplify its further analysis and interpretation. However, linear filters
depend generally on some parameters (e.g. a filter's bandwidth) which are
difficult to tune optimally for a particular signal or an image. Hence one would
need to consider a finite collection of parametric linear filters as follows
$\{\Psi_{m}, m \in \mathcal{M}\}$  from which one would like to select the one
with the best parameter to apply on the noisy image/signal $y$. Such a problem can be
formulated indeed in terms of our model selection framework by
considering a finite collection of linear estimators of
the noiseless signal or image $\beta$ as follows
$\{\hat{\beta}_m:=\Psi_{m}y, m \in \mathcal{M}\}$,
therefore the goal amounts to selecting one estimator
of $\beta$ with the lowest quadratic risk.

\subsection{Variable selection in regression}
In this problem, one commonly assumes a collection of possible configurations $ \mathcal{C}= \Big \{ \nu_m, m \in \mathcal{M}\Big \}$ of
$\beta$, where each configuration $\nu$ stands for a binary vector of the same size as $\beta$, and a component $\nu_k = 1$
means  that the predictor $X_k$ (i.e., the $k-th$ column of matrix $X$) is not part of the regression model
(i.e., $\nu_k=1$, $\beta_k = 0$), otherwise it is said to be part of the regression model (i.e., $\nu_k=0$, $\beta_k \neq 0$),
and the goal is to figure out the configuration of $\beta$ that achieves lowest predictive error (see
below for more insight on the use of the predictive risk), in other words, the most influential variables
(i.e., those with most explanatory power) in model (\ref{model}) among the set of variables $\{X_k, k =1,\cdots,p\}$.
So, let us denote for all $ m \in \mathcal{M}$ by $N_m$ the $p-$dimensional diagonal matrix such that $N_m(k,k) = \nu_k, k=1,\cdots,p$.
Hence one proceeds by minimizing with respect to each configuration $\nu_m \in \mathcal{C}$  a
constrained quadratic program of the form
\begin{equation}
\label{orth2} \nonumber
\Big(y - X\hat{\beta}_m\Big)^T P_m \Big(y- X\hat{\beta}_m\Big) \rightarrow \min_{\hat{\beta}/ N_m \hat{\beta} = 0} \nonumber
\end{equation}
for some $n-$dimensional symmetric matrix $P_m$ to achieve finally a collection of linear estimators
of $\beta$ as follows: $\Big\{ \hat{\beta}_m := \Psi_m y, m \in \mathcal{M}\Big\}$ where for all
$m \in \mathcal{M}$, one has $\Psi_m$ which is given by
\begin{equation}
\hat{\beta}_m =  C_m \bigg[ I_p - N_m  \Big( N_m C_m N_m\Big)^{\dag} N_m C_m \bigg] X^T P_m y \nonumber
\end{equation}
with
\begin{equation}
C_m = (X^T P_m X + N_m)^{\dag} \nonumber
\end{equation}\\
We would like to emphasize that our goal in the remainder is not to address
the problem of construction of the matrices $P_m$, $\Phi_m$, $H_m$, $N_m$ for
a given application since they are application-dependent and they are
constructed from the a-priori knowledge about the solution as we already
mentioned it above \footnote{in fact, the experts in these domains have already
done a great job in this respect, and a overview of the subject
in this paper would only be a weak copy of their previous findings \dots}.
Nevertheless, our goal--once a finite  collection of them has been chosen--is to provide the
user with an efficient model selection tool that allows him/her to choose the
almost best ones to recover the solution.

Before starting in addressing such a model selection issue, we would like
to enunciate upfront the following proposition which gives the exact
formula of the ideal linear estimator of $\beta$ with respect
to the quadratic risk (see formula (\ref{risk}) in section \ref{sec2}).
\begin{proposition}
\label{prop1}
The optimal linear estimator $\hat{\beta}^*$ that minimizes in $\mathbf{R}^p$
the formula of the quadratic risk is given by
\begin{equation}
\nonumber
\hat{\beta}^* = \beta (X\beta)^T\Big( (X \beta)(X\beta)^T+  R R^T \Big)^{\dag} y
\end{equation}
and its quadratic risk is given by
\begin{multline}
\nonumber
\mathbb{E}\Big[\| \hat{\beta}^* - {\beta} \|^2\Big] = \| \beta\|^2 \Big [ \\
(X\beta)^T {\Big( (X \beta)(X\beta)^T + R R^T \Big)^{\dag}}^T (X\beta)
(X\beta)^T {\Big( (X \beta)(X\beta)^T + R R^T \Big)^{\dag}} (X\beta) \\
-2 (X\beta)^T {\Big((X \beta)(X\beta)^T + R R^T \Big)^{\dag}}(X\beta)
+ \Big \| {\Big( (X \beta)(X\beta)^T + R R^T \Big)^{\dag}}(X\beta) R  \Big\|^2 +1 \Big ]
\end{multline}
\end{proposition}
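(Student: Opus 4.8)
The plan is to work directly at the level of the representing matrix $\Psi$: minimize the quadratic risk over all $p\times n$ matrices $\Psi$ and then identify the minimizer with the claimed $\hat{\beta}^{*}=\Psi^{*}y$. Writing $\mu=X\beta$ and $\Sigma=RR^{T}$, and using $y=\mu+Rz$ with $\mathbb{E}[z]=0$ and $\mathbb{E}[zz^{T}]=I_{n}$, I would first expand
\begin{equation}
\nonumber
\mathbb{E}\big[\|\Psi y-\beta\|^{2}\big]=\|\Psi\mu-\beta\|^{2}+\mathbb{E}\big[\|\Psi R z\|^{2}\big]=\|\Psi\mu-\beta\|^{2}+\mathrm{tr}\big(\Psi\Sigma\Psi^{T}\big),
\end{equation}
the cross term vanishing because the noise is centred, and the trace identity following from $\mathbb{E}[zz^{T}]=I_{n}$. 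This displays the risk as a single real-valued function $J(\Psi)$ of the entries of $\Psi$.

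The second step is to observe that $J$ is a convex quadratic in $\Psi$: a squared Euclidean norm is convex, and $\mathrm{tr}(\Psi\Sigma\Psi^{T})$ with $\Sigma$ symmetric positive semi-definite is a positive semi-definite quadratic form in the entries of $\Psi$. Hence every stationary point is a global minimizer and it suffices to solve $\nabla_{\Psi}J=0$. Using the standard matrix-calculus identities $\nabla_{\Psi}\|\Psi\mu-\beta\|^{2}=2(\Psi\mu-\beta)\mu^{T}$ and $\nabla_{\Psi}\,\mathrm{tr}(\Psi\Sigma\Psi^{T})=2\Psi\Sigma$, the stationarity condition becomes the normal equation, linear in $\Psi$,
\begin{equation}
\nonumber
\Psi\big(\mu\mu^{T}+\Sigma\big)=\beta\mu^{T}.
\end{equation}

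The third step is to solve this equation through the Moore--Penrose pseudo-inverse and to check solvability even when $\mu\mu^{T}+\Sigma$ is rank-deficient. Since $\mu\mu^{T}+\Sigma$ is symmetric positive semi-definite, its range is the sum of the ranges of $\mu\mu^{T}$ and $\Sigma$ and in particular contains $\mathrm{span}(\mu)$; therefore $\mu^{T}=\mu^{T}(\mu\mu^{T}+\Sigma)^{\dagger}(\mu\mu^{T}+\Sigma)$, the system is consistent, and $\Psi^{*}=\beta\mu^{T}(\mu\mu^{T}+\Sigma)^{\dagger}$ is a solution. By convexity $\Psi^{*}$ is a global minimizer, which yields exactly $\hat{\beta}^{*}=\beta(X\beta)^{T}\big((X\beta)(X\beta)^{T}+RR^{T}\big)^{\dagger}y$.

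Finally, I would substitute $\Psi^{*}$ back into $J$. Writing $A=(\mu\mu^{T}+\Sigma)^{\dagger}$, which is symmetric, the decisive simplification is that $\Psi^{*}\mu=(\mu^{T}A\mu)\,\beta$ is a scalar multiple of $\beta$, so the bias term reads $\|\Psi^{*}\mu-\beta\|^{2}=\|\beta\|^{2}(\mu^{T}A\mu-1)^{2}$; expanding the square produces the terms $(\mu^{T}A\mu)^{2}$, $-2\mu^{T}A\mu$ and $+1$ of the stated formula. For the variance term, $\Psi^{*}\Sigma\Psi^{*T}=(\mu^{T}A\Sigma A\mu)\,\beta\beta^{T}$, whence $\mathrm{tr}(\Psi^{*}\Sigma\Psi^{*T})=\|\beta\|^{2}\,\mu^{T}ARR^{T}A\mu=\|\beta\|^{2}\,\|R^{T}A\mu\|^{2}$, which is the quantity written $\|A(X\beta)R\|^{2}$ in the statement. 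Collecting the four contributions and factoring out $\|\beta\|^{2}$ gives the announced expression. I expect the only real obstacle to be the bookkeeping of the pseudo-inverse in the rank-deficient case, namely establishing consistency of the normal equation and confirming that the $(\cdot)^{\dagger}$ formula is a genuine global minimizer rather than merely a stationary point, together with tracking the transpose and symmetry conventions carefully enough that the variance term reconciles with the matrix-norm $\|A(X\beta)R\|^{2}$ used in the statement.
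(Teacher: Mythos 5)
Your proposal is correct and takes essentially the same approach as the paper, whose entire proof is the one-line remark that the proposition follows by minimizing the risk expression (\ref{oracle}) over all matrices $\Psi$ as an unconstrained quadratic program. You simply carry that program out in full, supplying the details the paper leaves implicit: convexity, the normal equation $\Psi\big((X\beta)(X\beta)^{T}+RR^{T}\big)=\beta(X\beta)^{T}$, the range/consistency check that legitimizes the Moore--Penrose solution when $(X\beta)(X\beta)^{T}+RR^{T}$ is singular, and the back-substitution into the risk, including correctly reading the paper's loosely written variance term as $\big\|R^{T}\big((X\beta)(X\beta)^{T}+RR^{T}\big)^{\dag}X\beta\big\|^{2}$.
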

\begin{proof}
The proof of this proposition consists of a simple minimization
of formula (\ref{oracle}) of section \ref{sec3} which thus amounts
to solving an unconstrained quadratic program.
\end{proof}
\begin{remark}
As an aside, it is interesting to note that proposition \ref{prop1} says among other
things that the solution with the lowest risk of model (\ref{model})
corresponds to the least-norm solution of $y' = X\beta$, which is nothing else
than $X^{\dag} y' \equiv X^{\dag} X \beta$. However, the latter does not
correspond generally to the solution that one is looking to estimate, hence
the estimation of the actual solution would have unavoidably a bigger
risk.
\end{remark}
Though such a result of proposition (\ref{prop1}) is of little use in practice
since it says that such an ideal filter depends on the unknown $\beta$ itself, nevertheless,
it might provide  some useful hints for the construction of potential linear models of
the solution for some applications. Moreover, when the object $\beta$ that one would
like to estimate represents some object among a finite library of objects,
then spanning such a library once allows to construct potential linear estimators of the
solution among which the ideal one exists.  For instance, in recognition or tracking tasks
in computer vision, the unknown $\beta$ might model a given physical object of interest
that was previously extracted from some image for example by using any segmentation method, and
one wants actually to recognize it among a finite library of objects $\mathcal{O}= \{ O_m, m \in \mathcal{M}\}$.
Then to each object $O_m, m \in \mathcal{M}$, one associates a $p-$dimensional vector of features
$\mathcal{\beta}_m$ (for instance, some discretization of the contour of $O_m$),
matrix $X$ stands for some linear geometric transformation (rigid, affine, projective, etc.)
applied onto the object, $X\beta$ represents the object $\beta$ after deformation and $Rz$
represents noise (and probably the errors due to the linear approximation of the deformation).
Hence, one might consider the finite collection of linear estimators of $\beta$ as follows:
$\{\hat{\beta}_m = \Psi_m y, m \in \mathcal{M}\}$ with $\Psi_m := \beta_m (X\beta_m)^T\Big( (X \beta_m)(X\beta_m)^T+  R R^T \Big)^{\dag}$ for all $ m \in \mathcal{M}$, and use the model selection procedure that we present
in the sequel to select the best one i.e., to recognize the actual object. \\
We should point out that the same approach might be used also in estimation;
actually when one has a-priori a finite collection of profiles of the vector of the coefficients
of the solution with respect to some orthonormal basis (or a family of orthonormal bases).
These profiles may be obtained, for example, if one knows that the solution belongs to some class of
$p-$dimensional vectors (assumed to be some discretization of continuous functions)
such that the vector of the coefficients of each vector $u$ of this class with respect
to some $p$ by $p$ orthogonal matrix $\Phi_{m'}$ can be bounded sharply by some parametric
function (a $p-$dimensional vector actually) $f_{m'}(\mu(u))$ which parameter $\mu(u)$ should
be of much lower dimension than $p$ of course (typically $2$ or $3$). In this case, one may proceed
by instantiating a finite collection of instances of the parameter $\mu(u)$ as follows
$\{\mu_m, m \in \mathcal{M}\}$, and constructs a finite family of linear estimators of $\beta$ as
follows $\{\hat{\beta}_m := \Psi_m, m \in \mathcal{M}\}$, with $\Psi_m := \beta_m (X\beta_m)^T\Big( (X \beta_m)(X\beta_m)^T+  R R^T \Big)^{\dag}$ and $\beta_m = \Phi_{m'}^T f(\mu_m)$ for all $m \in \mathcal{M}$.

\section{Data-driven model selection for linear regression and linear inverse problems}
\label{sec2}

Keeping in mind the linear regression model (\ref{model}) and the collection of linear estimators
of $\beta$ introduced in (\ref{lin_estimator}), we suggest to devise a data-driven  model
selection criterion that can select a good estimator of $\beta$ among such a
collection of its linear estimators, and we measure the performance
of any estimator $\hat{\beta}$ of $\beta$ with its quadratic risk given
by $\mathbb{E} \big [ \| \hat{\beta} - {\beta} \|^2 \big]$.

Clearly, when matrix $X$ is rank deficient, i.e., its rank is inferior than $p$,
model (\ref{model}) is not identifiable. In this case, one needs to put some assumptions
on the solution $\beta$ to guarantee identifiability of the model and to make of its
statistical inversion a possible question.Therefore, our identifiability hypothesis in this body of
work consists in saying that one knows a-priori some linear operator $\mathcal{K}$--called a noiseless
reconstructor of $\beta$-- such that one can uniquely recover the unknown vector quantity of interest $\beta$
from one noiseless observation of model (\ref{model}) (i.e., if matrix $R$ in (\ref{model}) was zero) by using a
linear formula of the form
\begin{equation}
\label{identif}
\beta = \mathcal{K} X \beta
\end{equation}
This shall be referred as the {\it linear identifiability condition}. The motivations for imposing an identifiability
condition of type (\ref{identif}) on model (\ref{model}) are presented in details in section \ref{sec3}. Please
note that without any prior on $\beta$, one can still write $\beta = \mathcal{K} X \beta + \overline{\mathcal{K}} \beta$,
with  $\mathcal{K} = X^{\dag}$ and $\overline{\mathcal{K}} = I_p - X^{\dag} X$. In this case, only the
component of $\beta$ that belongs to the subspace  $\mathcal{S} = \Big\{ X^{\dag} X t, t \in \mathbf{R}^p \Big \}$,
in other words $\mu = X^{\dag} X \beta$ (which is also the solution of $\|\mu\|^2 \rightarrow \min_{\mu \in \mathcal{S}}$)
could be theoretically recovered, and the left part of $\beta$ i.e., $\bar{\mu} = (I_p - X^{\dag}X) \beta $ which belongs
to the subspace $\mathbf{R}^{p} - \mathcal{S}$ hence is lost. Though it is generally of little use in practice,
one would recover such a least-norm estimate of the solution of (\ref{model_noiseless}) by writing the new
model $y = X\mu + Rz$, and one has $\mu$ which obeys the linear identifiability condition
with $\mathcal{K} = X^{\dag}$ since one can write $\mu = X^{\dag} X \mu$ (see also section \ref{sec3} for more details).
In section \ref{sec3}, we shall describe some situations which can lead to derive useful expressions of the noiseless
reconstructor $\mathcal{K}$ of $\beta$ to allow recovery of solutions other than such a least-norm solution.

With this being said, let us now focus on the model selection problem we stated
previously in section \ref{sec1}, and let us start by deriving for all $m \in \mathcal{M}$ the
expression of the quadratic risk of an estimator $\hat{\beta}_m$ of $\beta$, for all $m\in \mathcal{M}$. So,
one fixes some $m\in \mathcal{M}$, and from (\ref{model}) and (\ref{lin_estimator}) one has
\begin{equation}
\label{estimator_ter}
\hat{\beta}_{m} = \Psi_m y  = \Psi_m X \beta +  \Psi_m R z \nonumber
\end{equation}
hence
\begin{equation}
\label{dist_estim}
\hat{\beta}_{m} - \beta =  \big(\Psi_m X - I \big) \beta +  \Psi_m R z \nonumber
\end{equation}
then, by rising both sides of the latter equality to power of two, one obtains
\begin{multline}
\label{estim_square_dif}
\| \hat{\beta}_{m} - {\beta} \|^2  = \beta^T \big(\Psi_m X - I_p\big)^T \big(\Psi_m X - I_p\big)  \beta + z^T R^T \Psi_m^T \Psi_m R z \\+ 2   \beta^T\big(\Psi_m X - I \big)^T \Psi_m R z \nonumber
\end{multline}
Finally, by applying the expectation operator on both sides of the latter equality, one derives the following expression of
the quadratic risk of $\hat{\beta}_m$
\begin{equation}
\label{risk}
\mathbb{E}\Big[\| \hat{\beta}_{m} - {\beta} \|^2\Big]  = \beta^T \big(\Psi_m X - I_p\big)^T \big(\Psi_m X - I_p\big)  \beta + tr\Big( R^T \Psi_m^T \Psi_m R\Big) \nonumber
\end{equation}
or equivalently
\begin{equation}
\label{risk_bis2}
\mathbb{E}\Big[\| \hat{\beta}_{m} - {\beta} \|^2\Big]  =   \|\beta\|^2 + \beta^T  X^T \Psi_m^T \Psi_m X \beta - 2 \beta^T \Psi_m X \beta + \|\Psi_m R\|^2
\end{equation}
Formula (\ref{risk_bis2}) says that the quadratic risk of $\hat{\beta}_m$ decomposes as the sum of two terms; a bias term:
\begin{equation}
\label{bias_term}
\|\beta\|^2 + \beta^T  X^T \Psi_m^T \Psi_m X \beta - 2 \beta^T \Psi_m X \beta \nonumber
 \end{equation}
 and a variance term:
 \begin{equation}
 \label{variance_term} \nonumber
 \|\Psi_m R\|^2
 \end{equation}
and the best estimator of $\beta$ denoted by $\hat{\beta}_{m*}$ for some $m^* \in \mathcal{M}$ is definitely the one that achieves
the best bias-variance tradeoff in the formula of the quadratic risk (\ref{risk_bis2}); or equivalently (since $\|\beta\|^2$ is a constant),
the one that minimizes over $\mathcal{M}$ the expression
\begin{equation}
\label{oracle}
\beta^T  X^T \Psi_m^T \Psi_m X \beta - 2 \beta^T \Psi_m X \beta + \|\Psi_m R\|^2
\end{equation}
Unfortunately, since expression (\ref{oracle}) that one ideally  would like to minimize
over $\mathcal{M}$ depends on the unknown $\beta$ itself, hence such a minimization cannot
be carried out in practice. Thus, we refer to the optimal but the unaccessible procedure that minimizes
(\ref{oracle}) over $\mathcal{M}$ as an oracle, and the latter shall serve as a benchmark for assessing an
estimator's performance. \\ To the impossibility of minimizing exactly the expression of the quadratic risk adds, unfortunately,
another more challenging difficulty which has to do directly with identifiability of model (\ref{model})\footnote{Obviously,
as explained earlier in this paper, when $X\beta$ is the quantity which one looks to estimate (i.e., in the regression case),
model identifiability is not generally a concern.}. Indeed, when the rank of matrix $X$ is inferior than the size of the solution $\beta$ ($p$), only the linear transformation $X\beta$ of $\beta$ can be observed (up to noise), and because of the term $\beta^T \Psi_m X \beta$ in (\ref{oracle}) which doesn't depends upon $\beta$ only through $X\beta$, this makes it quite challenging
to derive a satisfactory data-driven model selection procedure. Nonetheless, with the identifiability
condition that we introduced earlier in this section, such a model selection procedure becomes possible. Indeed,
assume that one can write $\beta = \mathcal{K} X \beta$, one deduces the following new formula of the quadratic risk
\begin{equation}
\label{risk_id}
\mathbb{E}\Big[\| \hat{\beta}_{m} - {\beta} \|^2\Big]  =   \|\beta\|^2 + (X\beta)^T \Psi_m^T \Psi_m X \beta - 2 (X\beta)^T \mathcal{K}^T \Psi_m X \beta + \|\Psi_m R\|^2 \nonumber
\end{equation}
hence formula (\ref{oracle}) that one would like to minimize over $\mathcal{M}$ becomes
\begin{equation}
\label{oracle_id_c}
 (X\beta)^T \Big(\Psi_m^T \Psi_m- 2 \mathcal{K}^T\Psi_m \Big)  X \beta + \|\Psi_m R\|^2 \nonumber
\end{equation}
The latter formula has the advantage to depend on $\beta$ only through $X\beta$ which,
as we shall see in the remainder, makes it now possible to derive useful data-driven model
selection procedures.

\subsection{Model selection}
Now, we  propose to select an estimator of $\beta$ among its finite set of linear estimators
(\ref{lin_estimator}) by minimizing a penalized criterion of the form
\begin{equation}
\label{crit_quad}
\textrm{Crit}(m) = y^T \Big ( \Psi_m^T \Psi_m  - 2 \mathcal{K}^T  \Psi_m) y + \textrm{pen}(m)
\end{equation}
where $\textrm{pen}(m)$ stands for an arbitrary penalty function which good choice shall be addressed
later in this section. The estimator of $\beta$ denoted by $\tilde{\beta} = \hat{\beta}_{\tilde{m}}$
that minimizes criterion (\ref{crit_quad}) shall be referred as the penalized estimator of $\beta$,
and its quadratic risk $\mathbb{E}\Big[\| \tilde{\beta} - {\beta} \|^2\Big] $ shall be
referred as the penalized risk. The following theorem provides then some useful clues that help
choose the penalty $\textrm{pen}(m)$ in (\ref{crit_quad}).

\begin{theorem}
\label{main_result_q}
Assume model (\ref{model}) along with the collection of linear estimators of
$\beta$ introduced in (\ref{lin_estimator}), and assume equality (\ref{identif}).
Let us fix some positive number $\theta \in (0,1)$, and  consider for all $m \in \mathcal{M}$ the matrices
\begin{equation}
\mathcal{A}_m = R^T \Big (-\theta \Psi_m^T \Psi_m + \mathcal{K}^T  \Psi_m + \Psi_m^T  \mathcal{K} \Big ) R \nonumber
\end{equation}
\begin{equation}
\mathcal{B}_m = \Big (\theta \Psi_{m} - \mathcal{K}\Big) R R^T \Big (\theta \Psi_{m} - \mathcal{K}\Big)^T \nonumber
\end{equation}
 and denote by $s_m^{+}$ the largest positive eigen value of the symmetric matrix $\mathcal{A}_m$ and  by $r_m^{*}$ the largest
singular value of matrix $\mathcal{B}_m$. Let us now associate to each model $m \in \mathcal{M}$ two real positive numbers  $L_m$ and $h_m$ with the $L_m$'s satisfying $\Sigma = \sum_{m \in \mathcal{M}} \exp[- L_m ] < \infty$, and consider for all $m \in \mathcal{M}$
the quantity $Q_m$ defined as follows
\begin{multline}
Q_m = 2 tr\Big(R^T \mathcal{K}^T  \Psi_{m}R\Big)  -\theta \|\Psi_m R\|^2 + 2\big(\frac{r_{m}^*}{\theta} + s_{m} ^+\big) L_m + 2 \|A_{m} \|\sqrt{L_m+ h_m}\\
+ \lambda \Big ( \frac{\|A_{m} \|}{\sqrt{L_m} + \sqrt{L_m + h_m}} + \frac{r_{m}^*}{\theta} + s_{m}^+ \Big )^2 \nonumber
\end{multline}
for some positive number $\lambda$. It follows that for every penalty
function $\textrm{pen}(m), m \in \mathcal{M}$, the corresponding  penalized
estimator $\tilde{\beta}$ satisfies
\begin{multline}
(1-\theta)\mathbb{E}\Big[\|\tilde{\beta} - {\beta} \|^2 \Big] \leq  \inf_{m \in \mathcal{M}} \Big\{ \|\beta\|^2 +  \beta^T  X^T \Psi_m^T \Psi_m X \beta - 2 \beta^T \Psi_m X \beta + \|\Psi_m R\|^2 \\
- 2 tr\Big(R^T \mathcal{K}^T \Psi_m R \Big)  +  \textrm{pen}(m) \Big\} + \sup_{m \in \mathcal{M}} \Big ( Q_m -  \textrm{pen}(m) \Big ) + \frac{2\Sigma}{\lambda} \nonumber
\end{multline}
\end{theorem}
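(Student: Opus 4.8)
The plan is to run a Birg\'e--Massart-type comparison between the penalized criterion and the true quadratic risk, the identifiability hypothesis $\beta=\mathcal{K}X\beta$ being exactly what lets us express the stochastic part of $\textrm{Crit}(m)$ through the observable $y$ alone. First I would establish the algebraic bridge: writing $\gamma(m)=\textrm{Crit}(m)-\textrm{pen}(m)=y^{T}(\Psi_m^{T}\Psi_m-2\mathcal{K}^{T}\Psi_m)y$, substituting $y=X\beta+Rz$ and using $\mathcal{K}X\beta=\beta$, one gets
\[
\|\hat{\beta}_m-\beta\|^2=\gamma(m)+\|\beta\|^2+U_m,\qquad U_m:=2(Rz)^{T}\mathcal{K}^{T}\Psi_m y ,
\]
with $\mathbb{E}[U_m]=2\,\mathrm{tr}(R^{T}\mathcal{K}^{T}\Psi_m R)$. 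This identity turns the (unknown) risk into the computable criterion plus a centred remainder, and its expectation already reproduces the term $-2\,\mathrm{tr}(R^{T}\mathcal{K}^{T}\Psi_m R)$ that sits inside the infimum of the statement.

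Next I would exploit that $\tilde m$ minimises the criterion, so $\gamma(\tilde m)+\textrm{pen}(\tilde m)\le\gamma(m)+\textrm{pen}(m)$ for every fixed $m$. Feeding in the identity and setting $W_m:=U_m-\theta\|\hat{\beta}_m-\beta\|^2$, the term $\theta\|\tilde\beta-\beta\|^2$ can be transferred to the left-hand side, which is precisely what manufactures the factor $(1-\theta)$:
\[
(1-\theta)\|\tilde\beta-\beta\|^2\le\|\hat{\beta}_m-\beta\|^2-U_m+\textrm{pen}(m)+W_{\tilde m}-\textrm{pen}(\tilde m).
\]
Taking expectations, the fixed-$m$ block contributes exactly the bracket appearing in the infimum of the theorem, so everything reduces to controlling $W_{\tilde m}-\textrm{pen}(\tilde m)$; because $\tilde m$ is data-dependent this control must be \emph{uniform} over $\mathcal{M}$.

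The heart of the argument is the concentration analysis of $W_m$. Expanding $W_m$ in $z$ and again using $\mathcal{K}X\beta=\beta$, I would show it is a signed Gaussian chaos plus a linear Gaussian form: the quadratic-in-$z$ part is exactly $z^{T}\mathcal{A}_m z$, while the linear part splits into an $m$-independent, mean-zero piece $2z^{T}R^{T}\mathcal{K}^{T}\beta$ and a piece $-2\big[(\theta\Psi_m-\mathcal{K})Rz\big]^{T}v_m$ with $v_m:=(\Psi_m X-I_p)\beta$, accompanied by the deterministic budget $-\theta\|v_m\|^2$. For the chaos I would apply a Laurent--Massart/Hanson--Wright upper-tail inequality, whose trace yields $2\,\mathrm{tr}(R^{T}\mathcal{K}^{T}\Psi_m R)-\theta\|\Psi_m R\|^2=\mathrm{tr}(\mathcal{A}_m)$, whose sub-Gaussian term yields $2\|\mathcal{A}_m\|\sqrt{\,\cdot\,}$, and whose sub-exponential term is governed by the largest \emph{positive} eigenvalue $s_m^{+}$ (only the positive part drives the upper tail). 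For the troublesome $\beta$-dependent linear piece I would use the Gaussian deviation $-2\big[(\theta\Psi_m-\mathcal{K})Rz\big]^{T}v_m\le 2\sqrt{r_m^{*}}\,\|v_m\|\sqrt{2x}$, since $\|(\theta\Psi_m-\mathcal{K})R\|_{\mathrm{op}}^2=r_m^{*}$ is the top eigenvalue of $\mathcal{B}_m$, followed by Young's inequality $2\sqrt{r_m^{*}}\,\|v_m\|\sqrt{2x}\le\theta\|v_m\|^2+\tfrac{2r_m^{*}}{\theta}x$; the $\theta\|v_m\|^2$ then cancels the budget $-\theta\|v_m\|^2$ and leaves the scale $r_m^{*}/\theta$. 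This single step both eliminates all $\beta$-dependence from the fluctuation and explains why $s_m^{+}$ and $r_m^{*}/\theta$ surface together as the linear coefficient $2(\tfrac{r_m^{*}}{\theta}+s_m^{+})L_m$ of $Q_m$.

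Finally I would convert the pointwise tails into the stated expectation bound. Taking the deviation level to be $L_m$ (with the auxiliary shift $h_m$ feeding the sub-Gaussian term) makes each tail decay like $e^{-L_m}$; a union bound over the two events and summation over $\mathcal{M}$ produces $\sum_{m}e^{-L_m}=\Sigma$, while integrating the residual tail against the free parameter $\lambda$ is what generates both the quadratic remainder $\lambda(\cdots)^2$ inside $Q_m$ and the additive $\tfrac{2\Sigma}{\lambda}$, giving $\mathbb{E}\big[\sup_{m}(W_m-Q_m)_+\big]\le\tfrac{2\Sigma}{\lambda}$. Hence $\mathbb{E}[W_{\tilde m}-\textrm{pen}(\tilde m)]\le\sup_m(Q_m-\textrm{pen}(m))+\tfrac{2\Sigma}{\lambda}$, the $m$-independent term $2z^{T}R^{T}\mathcal{K}^{T}\beta$ dropping out by its zero mean, and combining with the fixed-$m$ block and taking the infimum over $m$ delivers the theorem. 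I expect the main obstacle to be exactly this last probabilistic step: obtaining a clean deviation inequality for the non-central signed chaos $W_m$, uniform over the random index $\tilde m$, in which the Frobenius norm $\|\mathcal{A}_m\|$, the positive eigenvalue $s_m^{+}$ and the scale $r_m^{*}/\theta$ stay cleanly separated, and tuning $(L_m,h_m,\lambda)$ so that the summed tail integral collapses to precisely $\tfrac{2\Sigma}{\lambda}$.
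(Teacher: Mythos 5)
Your proposal reproduces the paper's own proof essentially step for step: the identity $\|\hat{\beta}_m-\beta\|^2=\textrm{Crit}(m)-\textrm{pen}(m)+\|\beta\|^2+U_m$ obtained from $\mathcal{K}y-\beta=\mathcal{K}Rz$, the $\theta$-transfer manufacturing the factor $(1-\theta)$, the split of the fluctuation into the chaos $z^T\mathcal{A}_m z$, the $m$-independent mean-zero term $2z^TR^T\mathcal{K}^T\beta$, and the linear form in $v_m=\eta_m=(\Psi_m X-I_p)\beta$ cancelled against the budget $-\theta\|\eta_m\|^2$ by Young's inequality at scale $r_m^*/\theta$, and finally the level choice $x_m=L_m+\xi$ with the $h_m$-shift, the $\lambda$-split and the tail integration producing $2\Sigma/\lambda$, are exactly the steps in the appendix. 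The one genuine difference is the point you yourself flag as the obstacle: you bound the chaos and the linear Gaussian part by two separate tail inequalities joined by a union bound, which as written doubles the failure probability to $2\Sigma e^{-\xi}$ and would yield $4\Sigma/\lambda$ (unless each $L_m$ absorbs an extra $\log 2$), whereas the paper invokes its Lemma \ref{lemma_1}, a single concentration inequality for the combined form $z^TAz+b^Tz$ carrying one $e^{-x}$ tail, and then separates the two contributions deterministically via $\sqrt{\|\mathcal{A}_m\|^2+2\eta_m^T\mathcal{B}_m\eta_m}\leq\|\mathcal{A}_m\|+\sqrt{2}\sqrt{\eta_m^T\mathcal{B}_m\eta_m}$, which is precisely the ``clean joint deviation inequality'' your sketch calls for and costs no union-bound factor.
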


The proof of this theorem is deferred to the appendix section. \\

\subsection{Equivalent of the model selection result in the regression framework}
\label{sub_eq}
Though we considered the inverse problem framework and we argued that this was
w.l.g., however before commenting in details theorem \ref{main_result_q},
we would like to give its equivalent in the regression framework as this might
help make our approach as much clear as possible. Another motivation that we would like
to emphasize and which we had already the opportunity to discuss in the beginning
of this section is that, in contrast to the inverse problem framework, the
identifiability assumption about model (\ref{model}) is not necessary in the regression
framework to use the proposed model selection approach.
Indeed, such a linear identifiability condition might turn to be difficult
to establish in some applications, however, one might use instead
some priors about the solution (see the examples in section \ref{sec1})
so as to enhance identification of the solution, and feel rather
safe to use the  predictive risk as an estimator's measure
of performance.

Now, consider the finite collection of linear estimators of $\beta$ given in (\ref{lin_estimator}),
then one establishes the following formula of the predictive risk of $\hat{\beta}_{m}$,
for all $m\in \mathcal{M}$
\begin{equation}
\mathbb{E}\Big[X\|\hat{\beta}_{m} - X {\beta}_{m} \|^2\Big]  = \|X\beta\|^2 +  (X\beta)^T  \Big(\Psi_m^T X^T X \Psi_m -2 X \Psi_m \Big) (X\beta) \\
+ \|X \Psi_m R\|^2 \nonumber
\end{equation}
and one selects the model of $\beta$ by minimizing over $\mathcal{M}$
a penalized criterion of the form
\begin{equation}
\label{crit_p}
\textrm{Crit}_{pred}(m) = y^T \big(\Psi_m^T X^T X \Psi_m - 2X \Psi_m \big) y   + \textrm{pen}_{pred}(m)
\end{equation}
for an arbitrary penalty $\textrm{pen}^{pred}(m), m\in \mathcal{M}$. Let us
denote by $\tilde{\beta}^{pred} = \hat{\beta}_{\tilde{m}}$ the estimator
of $\beta $ that minimizes (\ref{crit_p}) over $\mathcal{M}$, then the
following theorem provides an upper bound of the predictive risk of 
$\tilde{\beta}^{pred}$.
\begin{theorem}
\label{main_result_p}
Assume the linear model (\ref{model}), along with the finite collection
linear estimators of $\beta$ introduced in (\ref{lin_estimator}). Let us fix
some positive number $\theta \in (0,1)$, and  consider  for all $m \in \mathcal{M}$
the matrices
\begin{equation}
\nonumber
\mathcal{A}_m^{pred} =  R^T \big(-\theta \Psi_{m} ^T X^T X  \Psi_{m} + X \Psi_{m}  + \Psi_{m}^T X^T \big)  R  \nonumber
\end{equation}
\begin{equation}
\nonumber
\mathcal{B}_m^{pred} = \big(\theta X\Psi_{m}  - I_n\big) R R^T \big(\theta X \Psi_{m}  - I_n\big)^T  \nonumber
\end{equation}
and denote by $s_m^{+}$ the largest positive eigen value of the symmetric matrix $\mathcal{A}_m^{pred}$ and  by $r_m^{*}$ the largest  singular value of matrix $\mathcal{B}_m^{pred}$. Let us now associate to each model $m \in \mathcal{M}$ two real positive numbers  $L_m^{pred}$ and $h_m^{pred}$ with the $L_m^{pred}$'s satisfying $\Sigma^{pred} = \sum_{m \in \mathcal{M}} \exp[- L_m^{pred} ] < \infty$, and consider for $m \in \mathcal{M}$ the quantity
{\begin{multline}
Q_m^{pred} = 2 tr\Big(R^T X\Psi_{m}R\Big)  -\theta \|X \Psi_m R\|^2 + 2\big(\frac{r_{m}^*}{\theta} + s_{m} ^+\big) L_{m}^{pred} + 2 \|A_{m} \|\sqrt{L_{m}^{pred}+ h_m^{pred}}\\ + \lambda \Big ( \frac{\|A_{m} \|}{\sqrt{L_{m}^{pred}} +
 \sqrt{L_{m}^{pred} + h_m^{pred}}} + \frac{r_{m}^*}{\theta} + s_{m}^+ \Big )^2 \nonumber
\end{multline}}
for some positive number $\lambda$. It then follows that for every  penalty function $\textrm{pen}_{pred}(m)$ for all $m \in \mathcal{M}$, the selected estimator $\tilde{\beta}^{pred}$ satisfies
\begin{multline}
(1-\theta)\mathbb{E}\Big[\|X\tilde{\beta}^{pred} - {X\beta} \|^2 \Big]
\leq  \inf_{m \in \mathcal{M}} \Big \{ \| X\beta\|^2 + (X\beta)^T \Big( \Psi_m^T X^T X \Psi_{m} - 2 X \Psi_{m} \Big  ) (X\beta)  \\
+  \|X \Psi_m R \|^2 - 2 tr\Big( R^T X \Psi_{m}R\Big)+ \textrm{pen}_{pred}(m) \Big \} + \sup_{m \in \mathcal{M}} \Big \{ Q_m^{pred} -  \textrm{pen}_{pred}(m) \Big \} + \frac{2\Sigma^{pred}}{\lambda}
\nonumber
\end{multline}
\end{theorem}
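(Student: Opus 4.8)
The plan is to deduce Theorem \ref{main_result_p} directly from Theorem \ref{main_result_q} by a change of variables, exploiting the observation already made in the introduction that estimating $X\beta$ in the regression framework is itself an inverse problem for the transformed target $u := X\beta$. Concretely, I would rewrite model (\ref{model}) as $y = I_n\, u + R z$, i.e.\ with design matrix $I_n$ and unknown $u = X\beta \in \mathbf{R}^n$, and regard $\{X\Psi_m y,\ m \in \mathcal{M}\}$ as a collection of linear estimators of $u$ with estimator matrices $\Phi_m := X\Psi_m$. The crucial point is that in this reformulation the linear identifiability condition (\ref{identif}) holds \emph{trivially}, with noiseless reconstructor $\mathcal{K} = I_n$, since $u = I_n\, u$; this is precisely why no identifiability assumption on the original model is needed in the regression framework.

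Next I would substitute $\beta \leftarrow u = X\beta$, $\Psi_m \leftarrow \Phi_m = X\Psi_m$, $X \leftarrow I_n$, and $\mathcal{K} \leftarrow I_n$ into the statement of Theorem \ref{main_result_q} and check that every object it produces specializes to its $\mathrm{pred}$-superscripted counterpart. For the matrices this is immediate: $R^T(-\theta\,\Phi_m^T\Phi_m + I_n^T\Phi_m + \Phi_m^T I_n)R$ reduces to $\mathcal{A}_m^{pred}$, and $(\theta\Phi_m - I_n)RR^T(\theta\Phi_m - I_n)^T$ reduces to $\mathcal{B}_m^{pred}$, so that $s_m^+$ and $r_m^*$ mean the same thing in both theorems. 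The same substitution turns $Q_m$ into $Q_m^{pred}$ (note $2\,tr(R^T I_n^T \Phi_m R) = 2\,tr(R^T X\Psi_m R)$ and $\theta\|\Phi_m R\|^2 = \theta\|X\Psi_m R\|^2$), and turns the bias term $\|\beta\|^2 + \beta^T X^T\Psi_m^T\Psi_m X\beta - 2\beta^T\Psi_m X\beta$ into $\|X\beta\|^2 + (X\beta)^T(\Psi_m^T X^T X\Psi_m - 2X\Psi_m)(X\beta)$, which is exactly the predictive-risk bias recorded just before the theorem.

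I would then verify that the two selection procedures coincide. Under the substitution the inverse-problem criterion (\ref{crit_quad}) becomes $y^T(\Phi_m^T\Phi_m - 2 I_n^T\Phi_m)y + \mathrm{pen}(m) = y^T(\Psi_m^T X^T X\Psi_m - 2X\Psi_m)y + \mathrm{pen}(m)$, which is literally the predictive criterion (\ref{crit_p}). Consequently the selected index $\tilde m$ is identical in both settings, the selected estimator of $u$ is $\widehat{u}_{\tilde m} = X\hat\beta_{\tilde m} = X\tilde\beta^{pred}$, and the inverse-problem risk $\mathbb{E}[\|\widehat{u}_{\tilde m} - u\|^2]$ is exactly the predictive risk $\mathbb{E}[\|X\tilde\beta^{pred} - X\beta\|^2]$. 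Applying Theorem \ref{main_result_q} to the transformed problem then yields the claimed bound verbatim, with $L_m, h_m, \Sigma$ relabelled as $L_m^{pred}, h_m^{pred}, \Sigma^{pred}$.

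Since the argument is essentially bookkeeping, I do not anticipate a genuine obstacle; the only points requiring care are the consistency of dimensions and norms under $X \leftarrow I_n$ (the target and noise now live in $\mathbf{R}^n$, so the traces and Frobenius norms in $\mathcal{A}_m^{pred}$, $\mathcal{B}_m^{pred}$ and $Q_m^{pred}$ must be read over the appropriate spaces), and the harmless reconciliation of notation---in particular the $\|A_m\|$ appearing in $Q_m^{pred}$ should be read as $\|\mathcal{A}_m^{pred}\|$ to match the substitution. If one preferred a self-contained argument, one could instead replay the proof of Theorem \ref{main_result_q} verbatim with $\Phi_m$ in place of $\Psi_m$ and $\mathcal{K}=I_n$, but the reduction above makes this redundant.
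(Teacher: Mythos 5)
Your reduction is correct and is essentially the paper's own proof: the author likewise deduces Theorem \ref{main_result_p} from Theorem \ref{main_result_q} via the observation that the predictive risk of $\hat{\beta}$ is the quadratic risk of $X\hat{\beta}$ viewed as an estimator of $u = X\beta$ in the model $y = I_n u + Rz$, where identifiability holds trivially with $\mathcal{K} = I_n$. Your version merely spells out the substitution checks (including the correct reading of $\|A_m\|$ as $\|\mathcal{A}_m^{pred}\|$) that the paper leaves implicit.
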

\begin{proof}
Theorem \ref{main_result_p} is a simple consequence of theorem \ref{main_result_q}
by taking into account the remarks above about the fact that
considering the quadratic risk as a performance measure of an
estimator $\widehat{X\beta}$ of $X\beta$ amounts to considering
the predictive risk as a performance measure of an estimator
$\hat{\beta}$ of $\beta$.
\end{proof}
As such a model selection result in the case of the predictive risk
is a special case of the more general result in the inverse problem
framework, therefore all our comments below, except the identifiability issue,
apply to such a regression framework as well.

\subsection{Commenting the choice of the penalty}
\label{sub_pen}
Theorem \ref{main_result_q} suggests to take the penalty function,
at least for models $m$ for which the quantity $Q_m$ is very large,
such that $\textrm{pen}(m) \geq  Q_m$ and choose the $L_m$'s in such a way
to get the value of the quantity $\frac{2\Sigma}{\lambda}$ very small
in order to achieve a reasonable value of the upper bound of the penalized
quadratic risk. Hence, in order to see the performance of the proposed model
selection approach, one has to choose accordingly the penalty function and
its constants, and let us assume in the remainder that the solution $\beta$
obeys the linear identifiability condition.

Thus, we propose to choose the penalty function for all $m \in \mathcal{M}$ such that $\textrm{pen}(m) =  Q_m$,
in other words, one puts  for all $m \in \mathcal{M}$
{\begin{multline}
\label{penalty}
\textrm{pen}(m) := 2 tr\Big(R^T \mathcal{K}^T  \Psi_{m}R\Big)  -\theta \|\Psi_m R\|^2
+ 2\big(\frac{r_{m}^*}{\theta} + s_{m} ^+\big) L_m  \\
+ 2 \|A_{m} \|\sqrt{L_m+ h_m} + \lambda \Big ( \frac{\|A_{m} \|}{\sqrt{L_m} +
\sqrt{L_m + h_m}} + \frac{r_{m}^*}{\theta} + s_{m}^+ \Big )^2
\end{multline}}
Theorem  \ref{main_result_q} then guarantees that the penalized estimator $\tilde{\beta}$ obtained with such a choice of the penalty satisfies
\begin{multline}
\nonumber
(1-\theta)\mathbb{E}\Big[\|\tilde{\beta} - {\beta} \|^2 \Big] \leq  \inf_{m \in \mathcal{M}} \Big\{ \|\beta\|^2 +  \beta^T  X^T \Psi_m^T \Psi_m X \beta - 2 \beta^T \Psi_m X \beta \\ + (1-\theta)\|\Psi_m R\|^2
+ 2\big(\frac{r_{m}^*}{\theta} + s_{m} ^+\big) L_m + 2 \|A_{m} \|\sqrt{L_m+ h_m} \\ + \lambda \Big ( \frac{\|A_{m} \|}{\sqrt{L_m} + \sqrt{L_m + h_m}} + \frac{r_{m}^*}{\theta} + s_{m}^+ \Big )^2  \Big \} + \frac{2\Sigma}{\lambda}
\end{multline}

However, it is still difficult to see in the latter formula how the performance of the proposed
penalized estimator compares with the lowest value of the predictive risk over $\mathcal{M}$
which is attained only by an oracle of course. Hence, the following proposition provides
an interesting strategy of choice of the constants in the penalty function
(\ref{penalty}) which enforces an oracle inequality.
\begin{proposition}
\label{prop2}
Assume a penalty of the form (\ref{penalty}). It follows that if one defines for all $m \in \mathcal{M}$
the $L_m$'s, for some positive sequence $\ell_m, m \in \mathcal{M}$ (these shall be referred as the model weights)
and for some $\alpha\in (0,1]$, as follows
\begin{equation}
L_m := \frac{\|\Psi_m R \|^6 \ell_m }{ (\frac{2r_{m}^*}{\theta}+ s_{m}^+)  \| \Psi_m R \|^4 + 4 \alpha^2 \|A_m\|^2 \big( \|\Psi_m R \|^2+ h_m\big)} \nonumber
\end{equation}
and defines, for some positive number $\epsilon$ the $h_m$'s as follows
\begin{equation}
\nonumber
h_m := \frac{\|A_m\|^2}{\epsilon^2(\frac{2r_{m}^*}{\theta}+ s_{m}^+)^2} \mbox{1{\hskip -2.5 pt}\hbox{I}}_{\frac{\|A_m\|}{2\sqrt{t_m}}\geq \epsilon (\frac{2r_{m}^*}{\theta}+ s_{m}^+)}
\end{equation}
with
\begin{equation}
t_m := \frac{\|\Psi_m R \|^6 \ell_m }{ (\frac{2r_{m}^*}{\theta}+ s_{m}^+)  \| \Psi_m R \|^4 + 4 \alpha^2 \|A_m\|^2  \|\Psi_m R \|^2} \nonumber
\end{equation}
and takes
\begin{equation}
\nonumber
 \lambda :=  \frac{\sqrt{2}\Sigma^{\frac{1}{2}}}{\sup_{m\in \mathcal{M}} \Big \{ \Big ( \frac{\|A_{m} \|}{2\sqrt{L_m}} + \frac{r_{m}^*}{\theta} + s_{m}^+ \Big ) \Big \}}
\end{equation}
then the penalized estimator $\tilde{\beta}$ satisfies
\begin{multline}
\nonumber
\mathbb{E}\Big[\|\tilde{\beta} - {\beta} \|^2 \Big] \leq \frac{1}{1-\theta} \inf_{m \in \mathcal{M}} \Big \{\|\beta\|^2 +  \beta^T  X^T \Psi_m^T \Psi_m X \beta - 2 \beta^T \Psi_m X \beta \\
\Big( 1 - \theta + \ell_m + \frac{1}{\alpha}\sqrt{\ell_m}  \Big)\|\Psi_m R \|^2 \Big \}  +
 \frac{2 \sqrt{2} (1+\epsilon )\Sigma^{\frac{1}{2}}}{1-\theta}  \Big[ \sup_{m\in \mathcal{M}} \Big\{ \Big(\frac{r_{m}^*}{\theta} + s_{m}^+  \Big ) \Big \} \Big]
\end{multline}
\end{proposition}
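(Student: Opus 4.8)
The plan is to start from the bound already displayed just before the statement, namely the conclusion of Theorem \ref{main_result_q} specialized to the choice $\textrm{pen}(m)=Q_m$ of (\ref{penalty}). With this choice the term $\sup_{m}(Q_m-\textrm{pen}(m))$ vanishes identically, so everything reduces to the model-by-model control of the deterministic \emph{excess}
\[
E_m := 2\Big(\tfrac{r_m^*}{\theta}+s_m^+\Big)L_m + 2\|A_m\|\sqrt{L_m+h_m} + \lambda\Big(\tfrac{\|A_m\|}{\sqrt{L_m}+\sqrt{L_m+h_m}}+\tfrac{r_m^*}{\theta}+s_m^+\Big)^2,
\]
together with the single additive remainder $\tfrac{2\Sigma}{\lambda}$. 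Throughout I write $v_m:=\|\Psi_m R\|^2$, $a_m:=\tfrac{r_m^*}{\theta}+s_m^+$ and $b_m:=\tfrac{2r_m^*}{\theta}+s_m^+$, so that the prescribed weights read $L_m=v_m^3\ell_m/\big(b_mv_m^2+4\alpha^2\|A_m\|^2(v_m+h_m)\big)$ and $t_m=L_m\big|_{h_m=0}$. I record at the outset the elementary facts that will do most of the work: $a_m\le b_m\le 2a_m$ (indeed $2a_m-b_m=s_m^+\ge0$), $s_m^+\le\|A_m\|$ (the top eigenvalue is dominated by the Frobenius norm), $L_m\le t_m$, and, discarding the nonnegative second summand of its denominator, $b_mL_m\le v_m\ell_m$.

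The heart of the argument is the purely deterministic estimate
\[
2\Big(\tfrac{r_m^*}{\theta}+s_m^+\Big)L_m + 2\|A_m\|\sqrt{L_m+h_m}\ \le\ \Big(\ell_m+\tfrac{1}{\alpha}\sqrt{\ell_m}\Big)v_m .
\]
Here the precise algebraic shape of $L_m$ and $h_m$ is reverse-engineered for exactly this inequality. The leading denominator piece $b_mv_m^2$ is what tames the first summand through $b_mL_m\le v_m\ell_m$ and $2a_m=b_m+s_m^+$, while the denominator piece $4\alpha^2\|A_m\|^2(v_m+h_m)$ is tailored so that, by an AM--GM/Cauchy--Schwarz balancing carrying the free parameter $\alpha$, the square-root summand $2\|A_m\|\sqrt{L_m+h_m}$ is bounded by $\tfrac1\alpha\sqrt{\ell_m}\,v_m$; the relation $s_m^+\le\|A_m\|$ is what lets the residual $s_m^+L_m$ be absorbed. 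This is where the indicator in the definition of $h_m$ is indispensable: the verification splits according to whether $\tfrac{\|A_m\|}{2\sqrt{t_m}}<\epsilon b_m$, in which case $h_m=0$ and $L_m=t_m$, or $\tfrac{\|A_m\|}{2\sqrt{t_m}}\ge\epsilon b_m$, in which case $h_m$ is switched on to the compensating value $\|A_m\|^2/(\epsilon^2b_m^2)$; in each branch the inequality is checked by direct substitution. I expect this case-bookkeeping to be the main obstacle, precisely because $a_m$, $b_m$ and $2a_m$ are interleaved and the direction of every bound must be tracked against the indicator.

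For the $\lambda$-summand I would use $\sqrt{L_m}+\sqrt{L_m+h_m}\ge 2\sqrt{L_m}$ to get $\tfrac{\|A_m\|}{\sqrt{L_m}+\sqrt{L_m+h_m}}+a_m\le \tfrac{\|A_m\|}{2\sqrt{L_m}}+a_m\le M$, where $M:=\sup_{m\in\mathcal{M}}\big(\tfrac{\|A_m\|}{2\sqrt{L_m}}+a_m\big)$ is exactly the denominator appearing in the prescribed $\lambda$. Since $\lambda M^2$ is then independent of $m$, it factors out of the infimum and is added to $\tfrac{2\Sigma}{\lambda}$. The stated value $\lambda=\sqrt2\,\Sigma^{1/2}/M$ is nothing but the minimizer of $\lambda\mapsto\lambda M^2+\tfrac{2\Sigma}{\lambda}$, so by AM--GM this sum collapses to $2\sqrt2\,\Sigma^{1/2}M$. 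To finish I would again call on the indicator split: the threshold $\epsilon b_m$ (with $b_m\le 2a_m$) is chosen so that $\tfrac{\|A_m\|}{2\sqrt{L_m}}$ is dominated by a multiple of $\epsilon$ times the eigenvalue quantities, which yields $M\le(1+\epsilon)\sup_{m}a_m$ once the constants are bundled into $\epsilon$.

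Assembling the three estimates gives
\[
(1-\theta)\,\mathbb{E}\big[\|\tilde\beta-\beta\|^2\big]\ \le\ \inf_{m\in\mathcal{M}}\Big\{\|\beta\|^2+\beta^TX^T\Psi_m^T\Psi_mX\beta-2\beta^T\Psi_mX\beta+\big(1-\theta+\ell_m+\tfrac1\alpha\sqrt{\ell_m}\big)v_m\Big\}+2\sqrt2\,(1+\epsilon)\Sigma^{1/2}\sup_{m}a_m,
\]
and dividing through by $1-\theta$ reproduces the announced inequality. The only genuinely delicate point is the central deterministic estimate of the second paragraph: all the tuned constants---the factor $4\alpha^2$, the exponents $6$ and $4$ on $\|\Psi_m R\|$, the auxiliary $t_m$, and the indicator threshold---are set against one another, so the computation, though elementary, demands careful separate treatment of the two indicator branches.
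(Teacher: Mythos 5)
Your architecture coincides with the paper's own commentary proving Proposition \ref{prop2}: set $\mathrm{pen}(m)=Q_m$ so that $\sup_{m}\big(Q_m-\mathrm{pen}(m)\big)$ vanishes in Theorem \ref{main_result_q}, absorb the remaining excess into $\big(\ell_m+\frac{1}{\alpha}\sqrt{\ell_m}\big)\|\Psi_mR\|^2$, majorize the $\lambda$-summand by $\lambda M^2$ with $M=\sup_m\big(\frac{\|A_m\|}{2\sqrt{L_m}}+\frac{r_m^*}{\theta}+s_m^+\big)$, observe that the prescribed $\lambda$ is the AM--GM minimizer of $\lambda M^2+\frac{2\Sigma}{\lambda}$ (collapsing it to $2\sqrt{2}\,\Sigma^{1/2}M$), and use the indicator in $h_m$ to pass from $M$ to $(1+\epsilon)\sup_m\big(\frac{r_m^*}{\theta}+s_m^+\big)$. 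Your $\lambda$ computation and the factoring of $\lambda M^2$ out of the infimum are correct and identical to the paper's.

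The genuine gap sits exactly where you place the ``heart'' and defer to direct substitution: the combined estimate $2\big(\frac{r_m^*}{\theta}+s_m^+\big)L_m+2\|A_m\|\sqrt{L_m+h_m}\le\big(\ell_m+\frac{1}{\alpha}\sqrt{\ell_m}\big)\|\Psi_mR\|^2$ is false as stated, and your absorption mechanism for the residual does not work. Write $v=\|\Psi_mR\|^2$, $a=\frac{r_m^*}{\theta}+s_m^+$, $b=\frac{2r_m^*}{\theta}+s_m^+$, $A=\|A_m\|$. The denominator of $L_m$ carries $b$, so one only gets $bL_m\le v\ell_m$, and $2a=b+s_m^+$ leaves the residual $s_m^+L_m$; bounding it via $s_m^+\le b$ yields a second full $v\ell_m$ (i.e.\ only $2aL_m\le 2v\ell_m$), while bounding it via $s_m^+\le A$ together with $L_m\le v^2\ell_m/(4\alpha^2A^2)$ yields $v^2\ell_m/(4\alpha^2A)$, which is not of the shape $\alpha^{-1}\sqrt{\ell_m}\,v$. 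Concretely, take $r_m^*=0$, $s_m^+=\|A_m\|=1$, $v=100$, $\ell_m=\alpha=\epsilon=1$: the indicator is off, $L_m=t_m=10^6/10400\approx 96.2$, and the left side is $\approx 192.3+19.6=211.9$, exceeding the right side $200$. In the branch where the indicator is on the failure is worse: $h_m=A^2/(\epsilon^2b^2)$ and $A/(2\sqrt{t_m})\ge\epsilon b$ force $L_m\le t_m\le h_m/4$, whence $2A\sqrt{L_m+h_m}\ge 2A\sqrt{h_m}=2A^2/(\epsilon b)$, which exceeds $\alpha^{-1}\sqrt{\ell_m}\,v$ by an arbitrary factor when $A$ is large. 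Note that the paper never claims your combined inequality: it proves only $2\|A_m\|\sqrt{L_m}\le\frac{1}{\alpha}\sqrt{\ell_m}\,v$ and silently substitutes $\sqrt{L_m}$ for $\sqrt{L_m+h_m}$ in the subsequent display, and it asserts $2\big(\frac{r_m^*}{\theta}+s_m^+\big)L_m\le\ell_m v$ with the same unaddressed $s_m^+$ residual (a $b$-versus-$2a$ mismatch). So your proposal reproduces the paper's route but promises, at its central step, a strictly stronger deterministic inequality than the paper establishes---and that inequality cannot be verified by the postponed case analysis; completing the argument would require repairing the constants (e.g.\ putting $2\big(\frac{r_m^*}{\theta}+s_m^+\big)$ in the denominator of $L_m$ and genuinely tracking the $h_m$ contribution, at the price of inflated factors in front of $\ell_m$). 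A similar slippage affects your last step, inherited from the paper as well: with threshold $\epsilon b$ and $b\le 2a$ the indicator argument gives $(1+2\epsilon)\sup_m a$, and it controls only $\frac{\|A_m\|}{\sqrt{L_m}+\sqrt{L_m+h_m}}$; in the on-branch $L_m\le t_m$ gives $\frac{\|A_m\|}{2\sqrt{L_m}}\ge\epsilon b$, so $M$ as defined through $\frac{\|A_m\|}{2\sqrt{L_m}}$ is bounded below, not above, by the intended target.
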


The proof of proposition \ref{prop2} shall be given in the form
of a commentary of theorem \ref{main_result_q} following
our choice of the penalty (\ref{penalty}). \\
So, let us assume a penalty of the form (\ref{penalty}), and choose the $L_m$'s
for some positive sequence of model weights $\ell_m, m \in \mathcal{M}$ and for
some positive number $\alpha$  (one can assume $\alpha\in (0,1]$ w.l.g.),  as follows
\begin{equation}
L_m := \frac{\|\Psi_m R \|^6 \ell_m }{ (\frac{2r_{m}^*}{\theta}+ s_{m}^+)  \| \Psi_m R \|^4 + 4 \alpha^2 \|A_m\|^2 \big( \|\Psi_m R \|^2+ h_m\big)} \nonumber
\end{equation}
and by noticing that
\begin{equation}
2\big(\frac{r_{m}^*}{\theta} + s_{m} ^+\big) L_m \leq \ell_m \| \Psi_m R \|^2 \nonumber
 \end{equation}
and
\begin{equation}
2 \|A_{m} \|\sqrt{L_m} \leq \frac{1}{\alpha}\sqrt{\ell_m} \| \Psi_m R \|^2 \nonumber
\end{equation}
one obtains the new upper bound of the penalized risk
\begin{multline}
(1-\theta)\mathbb{E}\Big[\|\tilde{\beta} - {\beta} \|^2 \Big]
\leq  \inf_{m \in \mathcal{M}} \Big \{ \|\beta\|^2 +  \beta^T  X^T \Psi_m^T \Psi_m X \beta - 2 \beta^T \Psi_m X \beta + \\
 \Big( 1 - \theta +  \ell_m + \frac{1}{\alpha}\sqrt{\ell_m} \Big) \|\Psi_m R \|^2 + \lambda \Big ( \frac{\|A_{m} \|}{\sqrt{L_m} + \sqrt{L_m + h_m}} + \frac{r_{m}^*}{\theta} + s_{m}^+ \Big )^2 \bigg \} + \frac{2\Sigma}{\lambda} \nonumber
\end{multline}
Now, if one puts
\begin{equation}
\nonumber
 \lambda :=  \frac{\sqrt{2}\Sigma^{\frac{1}{2}}}{\sup_{m\in \mathcal{M}} \Big \{ \Big ( \frac{\|A_{m} \|}{2\sqrt{L_m}} + \frac{r_{m}^*}{\theta} + s_{m}^+ \Big ) \Big \}}
\end{equation}
one finds that $\tilde{\beta}$ satisfies
\begin{multline}
\label{upper_bound_p5}
\mathbb{E}\Big[\|\tilde{\beta} - {\beta} \|^2 \Big] \leq \frac{1}{1-\theta} \inf_{m \in \mathcal{M}} \Big \{\|\beta\|^2 +  \beta^T  X^T \Psi_m^T \Psi_m X \beta - 2 \beta^T \Psi_m X \beta \\
\Big( 1 - \theta + \ell_m + \frac{1}{\alpha}\sqrt{\ell_m}  \Big)\|\Psi_m R \|^2 \Big \}  + \frac{2 \sqrt{2}}{1-\theta} \Big [ \sup_{m\in \mathcal{M}} \Big\{ \Big( \frac{\|A_{m} \|}{2\sqrt{L_m}} + \frac{r_{m}^*}{\theta} + s_{m}^+
 \Big )^2 \Big \} \Sigma \Big]^{\frac{1}{2}}
\end{multline}
which recalls the oracle equality (\ref{oracle}) up to the per-model multiplicative
quantity $\Big( 1 - \theta + \ell_m + \frac{1}{\alpha}\sqrt{\ell_m}  \Big)$ and the additive
constant which is given by
\begin{equation}
\nonumber
\Gamma(\theta, \alpha, \mathcal{L}) :=  \frac{2 \sqrt{2}}{1-\theta} \Big [ \sup_{m\in \mathcal{M}} \Big\{ \Big( \frac{\|A_{m} \|}{2\sqrt{L_m}} + \frac{r_{m}^*}{\theta} + s_{m}^+  \Big )^2 \Big \} \Sigma \Big]^{\frac{1}{2}}
\end{equation}
and which can be made universal at the price of augmenting the $\ell_m$'s. It is interesting
to note that one can see the quantity $\Big( 1 - \theta + \ell_m + \frac{1}{\alpha}\sqrt{\ell_m} \Big) \|\Psi_m R \|^2 $
as the difficulty of representing the solution $\beta$ by using model $m$,
expressed in different words, one would say that such a quantity measures somehow the
complexity of a given model $m$ with respect to the model collection  $\mathcal{M}$.

Regarding now the choice of the model weights $\ell_m$, one has to distinguish
generally  between the two cases : a small collection of models and a large collection of models.
To start, if one assumes a small collection of models (say, a few dozens), then one may take them to be fix,
i.e., $\ell_m := \ell$ for all $m \in \mathcal{M}$ while guaranteeing that for a reasonable choice of the constant $\ell$,
the value of $\Sigma$ can be bounded by a small enough constant $C$. In this case, one obtains over the set
 $\mathscr{L}$ of all collections of estimators such that $\mathscr{L} = \big\{\mathcal{L}', \textrm{ s.t. } \Gamma(\theta,\alpha, \mathcal{L}') \leq C \big\}$  a near-oracle performance of the form
\begin{equation}
\nonumber
 \mathbb{E}\Big[\|\tilde{\beta} - {\beta} \|^2 \Big] \leq K \inf_{m \in \mathcal{M}}  \mathbb{E}\Big[\|\hat{\beta}_m - {\beta} \|^2 \Big] + C
\end{equation}
with $ K = K(\theta, \alpha, C) = \frac{1- \theta +\ell(C) + \frac{1}{\alpha }\sqrt{\ell(C)} }{1-\theta}$.

Now, assume a large family of models, then taking the $\ell_m$'s as a fixed value for all $m \in \mathcal{M}$
may result in an inflation of the upper bound of the penalized risk. To account for this situation,
let us first define the following quantity for all $m \in \mathcal{M}$
\begin{equation}
\Delta_m := \frac{\|\Psi_m R \|^6 \ell_m }{ (\frac{2r_{m}^*}{\theta}+ s_{m}^+)  \| \Psi_m R \|^4 + 4 \alpha^2 \|A_m\|^2 ( \|\Psi_m R \|^2+ h_m)} \nonumber
\end{equation}
which can  be seen, in some sense, as a measure of a model's complexity, hence one can write $L_m =\Delta_m \ell_m$.
To start in exhibiting the intuition for the subsequent choice of the $\ell_m$'s, let us fix ideas
by assuming, for the moment, that  $\Delta_m$ can take its values in an ordered set of discrete values, and for
simplicity, one can assume that such values are the integers in the interval $\big[\Delta_{min}, \Delta_{max}\big]$. Let us also denote by $\textrm{card}_{\mathcal{M}}(\Delta) = \sum_{m\in \mathcal{M}} \mbox{1{\hskip -2.5 pt}\hbox{I}}_{\Delta_m = \Delta}$,
and put $L_m := L_{\Delta_m}$, for all $m\in \mathcal{M}$ which makes sense since one would like to
roughly privilege among models with the same complexity the one(s) which make(s) a difference
 (i.e., achieves the lowest value) in the bias term of the penalized risk. Hence one finds that
\begin{eqnarray}
\Sigma & = &\sum_{m \in \mathcal{M}} \exp\big[-\ell_m \Delta_m\big] =  \sum_{\Delta = \Delta_{min}}^{\Delta_{max}} \textrm{card}_{\mathcal{M}}(\Delta) \exp\big[-\ell_{\Delta} \Delta\big] \nonumber \\
& = & \sum_{\Delta = \Delta_{min}}^{\Delta_{max}} \exp\Big [\log\big[\textrm{card}_{\mathcal{M}}(\Delta)\big] -\ell_{\Delta} \Delta\Big] \nonumber
\end{eqnarray}
then, if one takes $\ell_{\Delta} = \delta + \frac{\log\big[\textrm{card}_{\mathcal{M}}(\Delta)\big]}{\Delta}$, interestingly,
one finds that $\Sigma  \leq \frac{\exp\big[-\Delta_{min}\delta \big]}{1-\exp[-\delta]}$; which means that with
such a choice of the model weights, one can make arbitrarily small the value of $\Sigma$. \\
Therefore, one would like to generalize this idea to the actual case in which the $\Delta_m$'s
do not take discrete values but continuous values instead in the hope of bounding sharply the
value of $\Sigma$ while keeping the expression of the penalized risk reasonable. Then one way to
achieve such a goal would be by mimicking the discrete quantity $\textrm{card}_{\mathcal{M}}(\Delta_m)$ in $\mathbf{R}^+$
by the means of a kernel diffusion for instance\footnote{This is remindful of Kernel Density Estimation (KDE), the
difference here is that the $\Delta_m$'s are deterministic values \dots}. By choosing the latter to be
a gaussian kernel with a bandwidth $\sigma$, then one possible expression that
mimics interestingly $\textrm{card}_{\mathcal{M}}(\Delta_m)$ in $\mathbf{R}^+$ is
\begin{equation}
\nonumber
 \sum_{m' \in \mathcal{M}} \exp\Big [ - \frac{\big(\Delta_{m'} - \Delta_{m}\big)^2}{2\sigma^2} \Big]
\end{equation}
One checks that in the discrete case, the latter quantity converges to  $\textrm{card}_{\mathcal{M}}(\Delta_m)$  as $\sigma^2 \rightarrow 0$. One may then take the $\ell_{m}$'s for some positive number $\delta$ as follows
\begin{equation}
\nonumber
\ell_{m} := \delta + \frac{\log\Big[{ \sum_{m' \in \mathcal{M}} \exp\big [ - \frac{\big(\Delta_{m'} - \Delta_{m}\big)^2}{2\sigma^2} \big]}\Big]}{\Delta_m}
\end{equation}
in which case, one finds that
\begin{equation}
\nonumber
\Sigma = \sum_{m \in \mathcal{M}} \frac{ \exp\big[-\delta \Delta_m\big]}{\sum_{m' \in \mathcal{M}} \exp\Big [ - \frac{\big(\Delta_{m'} - \Delta_{m}\big)^2}{2\sigma^2}\Big]}
\end{equation}
Though it is not generally possible to give a sharp closed-form expression of $\Sigma$,
one has $\Sigma$ which is overwhelmingly bounded by $\sum_{m \in \mathcal{M}}
\exp\big[-\delta \Delta_m\big]$ for a reasonable choice of the constant $\sigma^2$ (i.e.,
not taking it too small), hence if one fixes the value of $\sigma^2$,
a proper setting of the constant $\delta$ allows to bring down enough the value of
$\Gamma(\theta, \alpha, \mathcal{L})$, i.e., in such a way to achieve $\Gamma(\theta, \alpha, \mathcal{L}) \leq C $
for some small enough constant $C$. One should choose the constant $C$ to be small enough so as not
to inflate the additive constant $\Gamma(\theta, \alpha, \mathcal{L})$ in the formula of the upper
bound of the penalized risk, and big enough in order to keep the model weights $\ell_m, m \in \mathcal{M}$ reasonable.
In practical applications, one should take it as follows $C := o(\|\beta\|^2)$.\\
Concerning the choice of $\sigma$, there is not an optimal value of it actually since as $\sigma$ increases,
the $\ell_m$'s decrease (desired) resulting in the increase of $\Sigma$ (undesired), and vice-versa. Nevertheless,
there exists some intuitive facts that suggest to take $\sigma$  as follows
\begin{equation}
\sigma := \frac{\tau}{\mu p} \nonumber
\end{equation}
with $\mu$ which can be taken for example as follows $\mu:= 3$, and $\tau$ which is some positive
quantity which represents in some sense the scale of the quantities $\Delta_m, m \in \mathcal{M}$,
which, therefore, may be taken as follows
\begin{equation}
\nonumber
\tau := \sqrt{\frac{1}{|\mathcal{M}|} \sum_{m, m' \in \mathcal{M}} \Big(\Delta_m - \Delta_{m'}\Big)^2}
\end{equation}
In fact, the intuition for such a choice of $\sigma$ is that, if one had to redefine discrete
values for $\Delta_m$, one would achieve this as follows $\Delta_m := [\frac{\Delta_m}{\tau}] \cdot \tau, \forall m \in \mathcal{M}$ with $[x]$ which stands for the integer value of $x$. Hence by mapping the $|\mathcal{M}|$ values of $\Delta_m$ into the discrete interval $[1,\cdots,p]$, one realizes in some way a discretization of the $\Delta_m$'s with a discretization step which is equal to $\frac{\tau}{p}$, hence one retrieves somehow the discrete case that we discussed above. It follows that imitating the quantity $\textrm{card}_{\mathcal{M}}(\Delta)$ amounts to taking $3 \sigma \approx \frac{\tau}{p}$ (since almost $90\%$ of the energy of a gaussian distribution with standard deviation $\sigma$ is concentrated in an interval of size $\mu\sigma$ around its mean, with $\mu\approx 3$). \\

Regarding the setting of the $h_m$'s, following our choice above of the penalty and some of
its constants namely the $L_m$'s and $\lambda$, their role principally is to prevent the inflation of the additive constant $\Gamma(\theta,\alpha, \mathcal{L})$ thereby the inflation of the upper bound of the penalized risk when there are among the
collection $\mathcal{M}$ some models with too small weights (which can be seen
as nuisance models in some sense). So, to address the choice $h_m$'s, first let us denote by
\begin{equation}
t_m := \frac{\|\Psi_m R \|^6 \ell_m }{ (\frac{2r_{m}^*}{\theta}+ s_{m}^+)  \| \Psi_m R \|^4 + 4 \alpha^2 \|A_m\|^2  \|\Psi_m R \|^2} \nonumber
\end{equation}
hence, one may define the $h_m$'s as follows
\begin{equation}
\nonumber
h_m := \frac{\|A_m\|^2}{\epsilon^2(\frac{2r_{m}^*}{\theta}+ s_{m}^+)^2} \mbox{1{\hskip -2.5 pt}\hbox{I}}_{\frac{\|A_m\|}{2\sqrt{t_m}}\geq \epsilon (\frac{2r_{m}^*}{\theta}+ s_{m}^+)}
\end{equation}
and one has
\begin{equation}
\frac{\|A_{m} \|}{\sqrt{L_m} + \sqrt{L_m + h_m}} + \frac{r_{m}^*}{\theta} + s_{m}^+ \leq
(1+\epsilon )\big( \frac{r_{m}^*}{\theta} + s_{m}^+\big) \nonumber
\end{equation}
one finds that
\begin{eqnarray}
\nonumber
\Gamma(\theta, \alpha, \mathcal{L}) & = & \frac{2 \sqrt{2} (1+\epsilon )}{1-\theta} \Big[ \sup_{m\in \mathcal{M}} \Big\{ \Big(\frac{r_{m}^*}{\theta} + s_{m}^+  \Big )^2 \Big \} \Sigma \Big]^{\frac{1}{2}} \nonumber \\
& = &  \frac{2 \sqrt{2} (1+\epsilon )\Sigma^{\frac{1}{2}}}{1-\theta}  \Big[ \sup_{m\in \mathcal{M}} \Big\{ \Big(\frac{r_{m}^*}{\theta} + s_{m}^+  \Big ) \Big\} \Big]\nonumber
\end{eqnarray}
One may choose the constant $\epsilon$ for instance as follows $\epsilon :=1$;
which turns to be a good trade-off between keeping the constant $\Gamma(\theta, \alpha, \mathcal{L})$
reasonable on the one hand, and not inflating too much the weights $\ell_m$ on the other hand.

For the choice of two remaining penalty parameters namely $\theta$
and $\alpha$, as one could notice, their optimal choice strictly
speaking doesn't exist as this suggests knowledge upfront of the best
model. Nevertheless, it is easy to choose them very reasonably without 
prior knowledge of the best model by examining the formula of the upper 
bound of the penalized quadratic risk (\ref{upper_bound_p5}).
Indeed, firstly for $\theta$,  formula (\ref{upper_bound_p5}) suggests
to use values which lie between but far enough from the critical values $0$
and $1$, hence, unless there is a possibility of optimizing the value of
$\theta$ for a specific application (e.g. by simulation), we suggest to
choose it for example as follows $\theta:=\frac{3}{4}$ which happens to be a
good compromise between the bias and the variance term in the formula of the upper
bound of the penalized risk. Secondly for $\alpha$, one can see that increasing
$\alpha$ results in the decrease of the quantity $\big( 1 - \theta +  \ell_m + \frac{1}{\alpha}\sqrt{\ell_m} \big)$
and the increase of $\Sigma$ at the same time. And by remarking that the dominant
quantity in $\ell_m + \frac{1}{\alpha}\sqrt{\ell_m}$ is $\ell_m$, hence any value
greater enough than $0$ would be adequate for $\alpha$, hence by default, one may take
it as follows  $\alpha := 1$ which happens to be a reasonable choice.

The strategy of choice of the penalty and its constants that we presented in this section
constitutes of course one possible strategy that turns to enforce an oracle inequality
of the selected estimator independently of any application, and we do not exclude that
more optimized penalties for some specific applications--by the means for instance of
a simulation study that attempts to learn the optimal values of the involved
constants in the penalty for these applications--could be designed.
Other papers dedicated more to applications will follow this one anyway
in which the dear reader can find more hints for optimizing the penalty
for some specific applications.

\section{Linear identifiability assumption}
\label{sec3}
Whenever the estimation of the vector $\beta$ from (\ref{model}) is
the statistician's main concern (i.e., the inverse problem framework),
and the rank of matrix $X$ is inferior than $p$, one is faced
unavoidably to an identifiability problem of model (\ref{model}) that needs to be addressed
before using the proposed model selection approach. To put forward such
an issue, let us consider the following noiseless linear model
\begin{equation}
\label{model_noiseless}
y' = X \beta
\end{equation}
and consider the two subspaces of $\mathbf{R}^P$:
$\mathcal{S}_1 = \big \{ X^{\dag} X t, t \in \mathbf{R}^p \big\}$, and
$\mathcal{S}_2 = \big \{ (I_p -  X^{\dag} X) t, t \in \mathbf{R}^p \big\}$.
Please note that $\mathcal{S}_1 \cup \mathcal{S}_2 = \mathbf{R}^P$, and $\mathcal{S}_1 \cap \mathcal{S}_2 = {\O}$,
one deduces that all solutions of the form
\begin{equation}
\beta(\eta)  = X^{\dag} y' + (I_p - X^{\dag} X) \eta \nonumber
\end{equation}
for any $p-$dimensional real vector $\eta$, satisfy equation (\ref{model_noiseless}).
This simply means that, unless the component of $\beta$ that belongs to
$\mathcal{S}_2$, i.e., $(I_p - X^{\dag} X) \beta$ is known or identifiable with
respect to the identifiable component $X^{\dag} X \beta$ ,
one cannot generally recover $\beta$ from an instance of (\ref{model_noiseless}),
hence its estimation from an observation of model (\ref{model}) is highly problematic.

Therefore, in order to guarantee identifiability of model (\ref{model}), the hypothesis
which we put forward in section \ref{sec3} and which we called the linear identifiability
condition consists in saying that  one knows {\it a priori} a linear operator
$\mathcal{K}$ that we called a noiseless reconstructor of $\beta$ such that one can
write $\beta = \mathcal{K} X \beta$ or equivalently $(I_p - \mathcal{K} X )\beta = 0$. Such
a linear identifiability condition happens in fact to be realistic for various practical
applications and some of them are discussed below.

Firstly, in many engineering fields, one commonly assumes that the solution
$\beta$ that one would like to recover from an observation of (\ref{model}) is compressible
in some basis $\Lambda$ of $\mathbf{R}^p$ (for instance some wavelet basis \cite{mallat_1989, meyer_1990, daubechies_1992});
which means that if one computed the coefficients of the scalar product between $\beta$ and the respective vectors
of the basis $\Lambda$, many of these coefficients would be found to be zero (or almost zero).
We show indeed in this case that under some mild assumptions which happen to be realistic
for numerous practical applications, one may derive easily a possible expression of the
noiseless reconstructor  $\mathcal{K}$ that allows to overcome the identifiability
problem of the model. In short, saying that $\beta$ is compressible in some basis $\Lambda$ means that
one is capable of extracting from $\Lambda$ a subset of vectors which scalar product with $\beta$ is
fatally zero \footnote{When for instance $\beta$ is known to be smooth, any trigonometric or wavelet basis might do the job.}.
Hence one proceeds by extracting from $\Lambda$ a number $k = p-\textrm{rank}(X)$ of
such vectors, then use them to construct the rows of a matrix $\phi$ so as to enforce
on the solution $\beta$ an equality of the form $\phi \beta = 0$. One checks easily that if matrix $\phi$ satisfies
simultaneously  the two following conditions:
\begin{enumerate}
\item $\phi \beta = 0$
\item the rank of the augmented matrix $\left [ \begin{array}{c}X \\ \phi \end{array}\right]$ --defined by
the union of the rows of $X$ and the rows of $\phi$-- is equal to $p$
\end{enumerate}
then holds necessarily the following linear relationship between the two vectors $\beta$ and $X\beta$:
\begin{equation}
\label{beta_noiseless}
\nonumber
\beta = \Big( X^T X  + \phi^T \phi \Big)^{-1} X^T X\beta
\end{equation}
Hence, one can define the linear reconstructor $\mathcal{K}$ of $\beta$ as follows
\begin{equation}
\label{noiseless_reconst}
\nonumber
\mathcal{K} = \mathcal{K}(X, \phi) := \Big( X^T X  + \phi^T \phi \Big)^{-1} X^T
\end{equation}
More generally, one shows that for any matrix $\Pi$ such that the rank of the augmented matrix
$\left [ \begin{array}{c}X \\ \Pi \end{array}\right]$ --which rows
are the union of the rows of $X$ and the rows of $\Pi$-- is equal to $p$ then one has
the following linear relationship which holds for the two vectors $\beta$ and $X\beta$:
\begin{equation}
\label{beta_noiseless_b}
\nonumber
\beta = \Big( X^T X  +  \Pi^T \Pi \Big)^{-1} X^T X \beta + \Big(\Pi \big(I_p - X^{\dag} X\big)\Big)^{\dag} \Pi \beta
\end{equation}
In particular, if one chooses matrix $\Pi$ in such a way to have $\|\Pi \beta\| \ll \|\beta\|$,
thereby $ \|\Big(\Pi \big(I_p - X^{\dag} X\big)\Big)^{\dag} \Pi \beta \| \ll \|\beta\|$, then one has
\begin{equation}
\label{beta_noiseless_c}
\nonumber
\beta \approx \Big( X^T X  + \Pi^T \Pi \Big)^{-1} X^T  X \beta
\end{equation}
Such an idea might be useful when one a-priori knows that the solution
$\beta$ belongs to some subspace of $\mathbf{R}^p$ which is given by
$\{t \in \mathbf{R}^p, G t \approx t\}$ for instance a subspace of
the form $\{t \in \mathbf{R}^p, (G -I_p) t \leq C \|t\|^2\}$ with $C=o(1) $
and $G$ standing for some linear operator which is a $p$ by $p$ matrix
different enough than the $p$ by $p$ identity matrix; in the sense that
it can compensate for the rank deficiency of matrix $X$ in the sense
that we mentioned above. As an illustrative example, assume that the
solution is smooth, then it is known that multiplying it on the left by a smoothing
kernel $G_{\sigma}$ belonging to some family of parametric kernels $\mathcal{G}= \{G_\sigma , \sigma \in \mathcal{S}\}$
(e.g. gaussian kernels with increasing bandwidth $\sigma$) would yield approximately
the same solution provided evidently that the parameter $\sigma$ of $G_{\sigma}$ is
set in such a way not to flatten too much the solution. In this case, one would take
$G_{\sigma}$ as the kernel in $\mathcal{G}$ with the smallest possible smoothing power
corresponding, say, to a parameter $\sigma^*$ and such that the augmented matrix
$\left [ \begin{array}{c}X \\ I_p - G_{\sigma^*} \end{array}\right]$ is of full rank (i.e., $p$).
One could then meet partially the identifiability condition of model (\ref{model})
with matrix $\phi$ being equal to $I_p - G_{\sigma^*}$.

We mentioned in section \ref{sec2} that one can recover, by using the proposed model selection approach,
the least-norm solution $\mu = X^{\dag} y' = X^{\dag} X\beta $ of the equation $y' = X\beta$,
in other words the solution of the optimization problem
\begin{equation}
 \|\mu\|^2 \rightarrow \min_{X\mu = X\beta} \nonumber
\end{equation}
since one has $y = X\mu + Rz $ and the linear identifiability
condition which is met for $\mu$ since one has $\mu = X^{\dag} X \mu$. One can actually generalize
such an idea in order to estimate all identifiable solutions of the form
\begin{equation}
\mu^T \Pi \mu \rightarrow \min_{\begin{array}{l} X\mu = X\beta \\ \phi \mu =0 \end{array}} \nonumber
\end{equation}
for some $p$ by $p$ positive semi-definite symmetric matrix $\Pi$ and some
$k$ by $p$ matrix $\phi$ ($k\leq p- \textrm{rank}(X)$).  Then, one checks easily that,
if matrix $\big( \Pi +  X^T X + \phi^T \phi \big)$ is of full rank, such a solution $\mu$
is unique (i.e., identifiable) and it is given by the formula
\begin{equation}
\mu = \mathcal{K} X \beta =  \mathcal{K} X \mu \nonumber
\end{equation}
where
\begin{equation}
\label{rec1}
\mathcal{K} = B \big( X^T -X^T X A X^T  - \phi^T \phi A X^T \big) + A X^T  
\end{equation}
with
\begin{equation}
A = \big( \Pi +  X^T X + \phi^T \phi \big)^{-1} \nonumber
\end{equation}
and
\begin{equation}
B= \big( X^T X + \phi^T \phi \big)^{\dag}\nonumber
\end{equation}
In practice, one may approximate formula (\ref{rec1}) for a great 
enough positive number $\mu$ as follows
\begin{equation}
\mathcal{K} \approx \mu \Big ( \Pi + \mu X^T X + \mu \phi^T \phi \Big )^{-1} X^T \nonumber
\end{equation}
Such a notion might be useful when $\beta$ belongs for instance
to some $\ell_2$-body (ellipsoid), in other words, one knows some
orthonormal matrix $\Phi$ and a positive semi-definite matrix $C$
such that
\begin{equation}
\big( \Phi \beta\big)^T C (\Phi \beta\big) \leq 1 \nonumber
\end{equation}
Hence, one might define  for example matrix $\Pi$ as follows
\begin{equation}
\Pi :=  \Phi^T C \Phi \nonumber
\end{equation}
and matrix $\phi$ might be useful if one knows that some
of the components of the vector of the coefficients $\Phi \beta$
are fatally zero, in this case, one constructs $\phi$  as follows
\begin{equation}
\phi = D\{ \delta_k \}_{k=1,p} \Phi \nonumber
\end{equation}
with $\delta_k = \mbox{1{\hskip -2.5 pt}\hbox{I}}_{(\Phi \beta)_k = 0}$. A
common example is when the coefficients of $\beta$ with respect
to some orthonormal basis $\Phi$ decay rapidly typically like a power
law, which is the case of most useful practical solutions when
the orthonormal basis $\Phi$ is appropriately chosen.\\
We would like to add that the author is currently investigating
other identifiability schemes that might broaden the scope of
application of the proposed method.

\section{A numerical study}
\label{sec5}
Please, note that our experiments below were realized with the penalty (\ref{penalty}),
and its constants were chosen as explained in subsection \ref{sub_pen}. We 
will show the performance of our method on two applications by using 
the classical performance ratio $\rho = \frac{\mathbb{E}\big[\|\tilde{\beta} - \beta\|^2\big]}{\min_{m \in \mathcal{M}}\mathbb{E}\big[\|\hat{\beta}_m - \beta\|^2\big]}$, where 
$\mathbb{E}\big[\|\tilde{\beta} - \beta\|^2\big]$ is estimated by averaging 
the value of $\|\tilde{\beta} - \beta\|^2$ over $50$ instances of the
noisy signal $y$, and $\min_{m \in \mathcal{M}}\mathbb{E}\big[\|\hat{\beta}_m - \beta\|^2\big] $
is computed directly by using formula (\ref{risk}). \\
The first application concerns 
gaussian filtering for smooth signals, and the second application concerns statistical inversion of 
ill-posed linear inverse problems by using regularity  (smoothness) and parsimony priors on the solution. 

All our experiments below were performed on the following signal (see fig. \ref{fig:fig_w}) 
and for $p=100$: 
{\small
\begin{equation}
\beta(t) = \frac{1}{50}\Big(\exp[-t](t/10)^2/2+(t/10)\log(t/10+1)+ 25\sin(t/5)\exp[t/50]\Big); \forall t= 1,\cdots,p\nonumber
\end{equation}
} 
and their Matlab code is available upon mail request to the author. 

\subsection{Gaussian smoothing}
\label{sub_exp1}
We assume an homoscedastic linear regression model
\begin{equation}
 y(t) = \beta(t) + z(t); \,\, t = 1,\cdots,p \nonumber
\end{equation}
where $z(t), t=1,\cdots,p$ are i.i.d. standard gaussian variables, and 
we consider the collection of $M$ linear filters $\big\{G_{\sigma_m}, m = 1,\cdots, M\big\}$
where for all $m = 1,\cdots,M$, $G_{\sigma_m}$ stands for a gaussian filter ($p$ by $p$
symmetric matrix) with bandwidth $\sigma_m$ defined as follows 
\begin{equation}
G_{\sigma_m}(i,j) :=  \frac{1}{a_i^m} \exp\Big[-\frac{(i-j)^2}{2\sigma_m^2}\Big]; \, \, \forall i,j =1,\cdots,p \nonumber
\end{equation}
where for all $m = 1,\cdots,M$ and for all $i=1,\cdots,p$, $a_i^m$ stands for a normalization constant 
with respect to row $i$ of $G_{\sigma_m}$ given by
\begin{equation}
a_i^m := \frac{1}{p} \sum_{j=1}^p \exp\Big[-\frac{(i-j)^2}{2\sigma_m^2}\Big] \nonumber
\end{equation}
Typically, we take  the $\sigma_m$'s as multiples of the value $\sigma = \frac{10}{M}$ as follows $\sigma_m = m \cdot\sigma, m = 1,\cdots,M$. The goal is then to select the gaussian filter with
the best bandwidth to apply on the signal $y$ by using the described
model selection approach, and the results are summarized in table \ref{table_1}
above.
\begin{table*}
\caption{Some results showing the performance ratio of the proposed method 
versus the oracle performance for the problem of signal gaussian smoothing;
for different sizes (M) of the model collection.}
\label{table_1}
\begin{tabular}{crrrrc}
\hline
Gaussian smoothing by model selection\\
M & \multicolumn{1}{c}{$\rho$} \\
\hline
$~~50$ &  1.7321 \\
$~~100$ & 1.6943 \\
$~~200$ & 1.6135\\
$~~500$ & 1.5640   \\
$~~1000$ & 1.5875   
\\ \hline
\end{tabular}
\end{table*}

\subsection{Statistical inversion of ill-posed linear inverse problems}
\label{sub_exp2}
We assume the following linear inverse problem 
\begin{equation}
 y = X \beta + z; \,\, t = 1,\cdots,p \nonumber
\end{equation}
where $z= z(t)_{t=1,\cdots,p}$ stands for a standard $p-$dimensional gaussian 
vector, and $X$ is an ill-conditioned $p$ by $p$ matrix. For our simulations,
we generated such a matrix $X$ randomly such that for all $i,j=1,\cdots,p$, 
one has $X(i,j)$ is an i.i.d. standard gaussian variable. To check the 
ill-conditioning of a randomly generated matrix $X$, we computed the ratio 
between its largest and smallest singular value $\frac{s^*}{s_*}$. For the matrix $X$ we used, we found that 
$s^* = 19.9659$ and $s_* = 0.0098$, hence  $\frac{s^*}{s_*} \approx 2043.7$.
Now, to recover $\beta$ from a a noisy observation $y$, we used 
a collection of $M$ linear filters of the form (see section \ref{sec1}) :
\begin{equation}
\label{sim_2_f}
  \Psi_m := \Big ( X^T X + \mathcal{H}_m \Big )^{-1} X^T 
\end{equation}
where, for all $m=1,\cdots,M$ , one has $\mathcal{H}_m$ which stands 
for some linear combination of first, second and third order discrete
differential operators as follows
\begin{equation}
\mathcal{H}_m = a_m \big(\mathcal{D}^1\big)^T \mathcal{D}^1 + b_m \big(\mathcal{D}^2\big)^T \mathcal{D}^2 + c_m 
\big(\mathcal{D}^3\big)^T\mathcal{D}^3\nonumber
\end{equation}
where $a_m$, $b_m $ and $c_m$ stand for three positive numbers, and 
$ \mathcal{D}^1$, $\mathcal{D}^2$  and $\mathcal{D}^3$ stand 
respectively for first, second and third order differential operators given by
\begin{equation}
\nonumber
\mathcal{D}^1(i,j) = \left \{ \begin{array}{cr} -1, &  \text{if }\, i=j. \\
                                                 1, &  \text{if}\, i=j-1. \\
                                                 0, & \text{else}.
 \end{array}\right.
\end{equation}
and
\begin{equation}
\mathcal{D}^2 = \mathcal{D}^1 \mathcal{D}^1 \, \, \,; \mathcal{D}^3 = \mathcal{D}^1 \mathcal{D}^2 \nonumber
\end{equation}
so as to achieve different degrees of smoothness in the recovered solution. \\
The sequence of the triplets $\{(a_m, b_m, c_m), m \in \mathcal{M}\}$  was
generated as follows. So, for all $m \equiv m(i,j,k)$ for some $i,j,k \in [0,\cdots,9]$
\begin{equation}
a_m := (2^i-1) \, \, ; b_m := (2^j-1) \, \, \, ; c_m := (2^k-1)  \nonumber
\end{equation}
so we ended up with a collection of $1000$ linear filters of the form (\ref{sim_2_f}). 
We repeated this experience for $50$ instances of the noisy signal $y$ in order
for us to be able to estimate the performance ratio $\rho = \frac{\mathbb{E}\big[\|\tilde{\beta} - \beta\|^2\big]}{\min_{m \in \mathcal{M}}\mathbb{E}\big[\|\hat{\beta}_m - \beta\|^2\big]}$ between the penalized
risk and the oracle risk; so we could estimate $\rho= 4.8936$. Clearly,
such a value of $\rho$ is about three times greater than in the  experiments
we showed in subsection \ref{sub_exp1} which, at first glance, may 
look like an underperformance of the presented approach. This is not actually 
the case and this needs to be moderated for the reasons that we review here. The 
first reason is that the problem we treated in this subsection is much more complicated then
the previous one since matrix $X$ is severely ill-conditioned which results in 
larger penalties, thereby in larger upper bounds of the penalized risk. The second
reason is that we used so strong priors that an oracle was enabled 
to recover the original signal almost perfectly. To see this, we computed 
for the present experiment the relative error ratio $ \frac{\min_{m \in \mathcal{M}}\mathbb{E}\big[\|\hat{\beta}_m - \beta\|^2\big]}{\|\beta\|.^2}$ and we found a value of order of $0.2\%$; 
which means that the oracle succeeded in recovering almost perfectly the actual signal $\beta$.
We compared such oracle relative error with the one of our method, and we noticed that the latter
could recover the solution with a relative error of order less than $1\%$ which is
not that bad (see figure \ref{fig:fig_2} for visual assessment of the recovered solution). Please, 
note that if one wanted to recover the solution in the present experiment by using direct inversion of the linear model as follows $X^{-1}y$, then one's expected value the relative error ratio, i.e., $\frac{\mathbb{E}\big[\|X^{-1}y - \beta\|^2\big]}{\|\beta\|.^2}$
would be of the order of $610\%$ which is of course unreasonable from a practical point of 
view (see also figure \ref{fig:fig_2} for a visual constatation).
\begin{center}
\begin{figure}
\begin{center}
{\includegraphics[width = 11cm, height = 7cm]{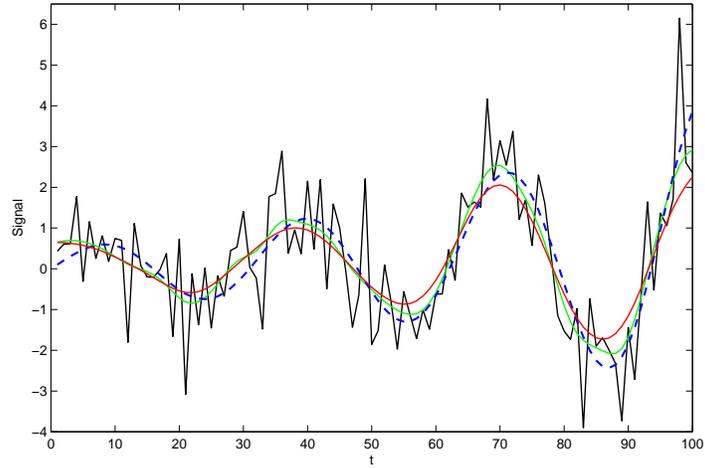}}
\caption{An instance of gaussian smoothing of a noisy signal by using the
described model selection method : the number of models is $M = 1000$;  the
oracle selected for this signal $\sigma^* = 2.43$ and the method selected $\tilde{\sigma} = 4.05$.
{\bf In dashed blue}: The original signal ($\beta$) ;  {\bf In black}:  The noisy signal (y) ;
{\bf In green} : The oracle-driven smoothed signal ($\hat{\beta}^*$) ; {\bf In red}: The data-driven smoothed signal ($\tilde{\beta}$).}
\label{fig:fig_w}
\end{center}
\end{figure}
\begin{figure}
\begin{center}
{\includegraphics[width = 11cm, height = 7cm]{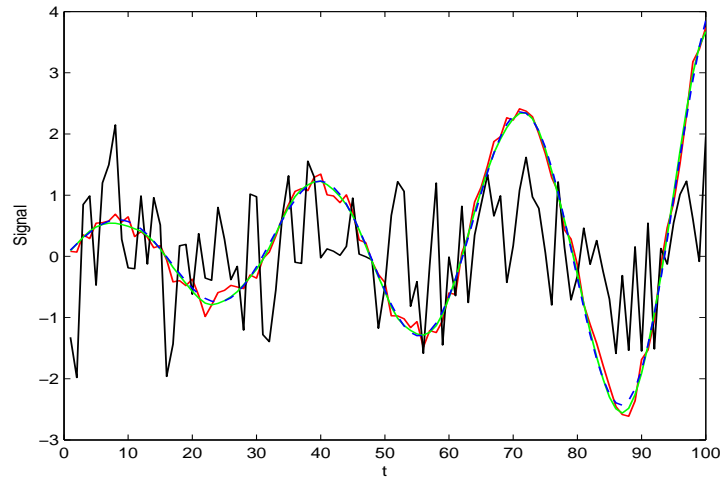}}
\caption{An instance of a statistical inversion of a linear inverse problem 
with regularity prior on the solution by using the proposed model selection approach : 
{\bf In dashed blue}: The original signal ($\beta$) ;  {\bf In black}:  The noisy signal ($\frac{y}{3}$) scaled
by a factor of $\frac{1}{3}$ to allow better visualization ; {\bf In green} : The oracle-driven estimated solution 
($\hat{\beta}^*$) ; {\bf In red}: The data-driven estimated solution ($\tilde{\beta}$).}
\label{fig:fig_2}
\end{center}
\end{figure}
\end{center}

\section{Conclusion}
We shall conclude the present paper briefly by saying that we
presented a new model selection framework that addresses the
problem of non least-squares linear estimation in linear 
regression and linear inverse problems by using model 
selection via penalization in the spirit of the pioneering 
works on non-asymptotic model selection works of Birg\'e and Massart mainly \cite{birge_massart_2007}, 
and we showed its good performance on two practical problems renowned 
to be difficult to address in practical applications. Moreover,
we think that with more optimized penalty constants (optimized 
for instance by the means of a simulation study), one could achieve even 
better performance of the proposed approach. We would like then to 
note that other papers dedicated more to applications of the proposed 
approach will follow the present one in which we plan to study more optimized 
penalties for various applications pertaining to our field 
of expertise (mainly signal and image processing applications). 

The present paper constitutes a first attempt by the 
author to give a satisfactory answer to the question of 
non-least squares estimation in regression and inverse problems
by model selection. There are, of course, still many challenges that 
we would like to address along with the statistical community
in the near future before achieving a complete final package 
of the present approach, among which we review some below in 
the form of question/answer:
\begin{itemize}
\item Could our results in this paper be improved ? Our answer to 
this question would be by "yes". Indeed, though we personally think that 
the concentration inequality that we made use of to prove 
the main theorem in this paper is sharp (see lemma \ref{lemma_1}), 
nevertheless, if one could propose sharper concentration inequalities 
that could lead to smaller penalties, one might improve on our model 
selection results in this paper. 
\item What happens if one used instead smaller penalties than the one
we proposed in this paper? In fact, one notices that theorem \ref{main_result_q} 
does not forbid the use of smaller penalties than penalty (\ref{penalty})
which we showed to enforce an oracle inequality. However, since
theorem \ref{main_result_q} only gives an upper bound of the penalized
risk, hence it is difficult to figure out actually what would be the 
behavior of the penalized estimator if one used smaller penalties mainly
for models with a value of $Q_m$ which is too big. This is an open
question that we will endeavor to solve in the near future hopefully
in the spirit of the findings of Birg\'e \& Massart in \cite{birge_massart_2007}.
\item When a linear model is underdetermined, is the linear identifiability 
assumption really mandatory in order for the model selection procedure to work? 
A rapid answer to this question would be by "yes", because such an identifiability assumption
serves in some sense as a compass for the model selection procedure
to go in the right direction in an attempt to recover the \emph{sought} 
solution of interest (among an infinite number of candidate solutions),
which turns to be rather subjective because it is imposed by the user. 
Nevertheless, we do not exclude that when some generic assumptions 
can be made on the solution of interest (by assuming for instance that 
it belongs to some restrictive class of $p-$dimensional vectors), 
one might devise for example some (implicit) identifiability schemes that 
can achieve comparable results to those in this paper.
\item When noise matrix $R$ is unknown, how one should modify 
the penalty function to allow  simultaneous estimation
of $\beta$ and $R$? We believe that this could be done
in the near future for example in the spirit of the recent 
work of Baraud and collaborators in \cite{baraud_giraud_huet_2009}.
\item What is the convergence rate of the proposed estimation approach
with respect to some classes of the vector $\beta$ (e.g. a Sobolev body)? 
We did not answer yet this question in the present paper, however, we are 
currently investing a significant amount of our time trying 
to answer this question and many other theoretical questions 
which could be of significant interest either from a theoretical or from a 
practical point of view.
\end{itemize}
By saying this, we concluded then this paper.

\section{Appendix section}\label{app}

\section*{Appendix 1 : Proof of the main theorem}
\begin{proof}
We shall use sometimes in the proof the fact that $2 a b \leq \eta a^2 + \frac{b^2}{\eta}, \forall a,b,\eta>0$.\\
Let us fix some $m \in \mathcal{M}$. One has one the one hand
\begin{multline}
\| \hat{\beta}_{m} - {\beta} \|^2  = \beta^T \big(\Psi_m X - I_p\big)^T \big(\Psi_m X - I_p\big)  \beta + z^T R^T \Psi_m^T \Psi_m R z \\+ 2   \beta^T\big(\Psi_m X - I \big)^T \Psi_m R z \nonumber
\end{multline}
and by using the fact that $\beta = \mathcal{K} X \beta$, one finds that
\begin{multline}
\| \hat{\beta}_{m} - {\beta} \|^2  = \|\beta\|^2 + (X\beta)^T \Big ( \Psi_m^T \Psi_m - 2 \mathcal{K}^T  \Psi_m \Big) X \beta + z^T R^T \Psi_m^T \Psi_m R z\\
+ 2   (X\beta)^T\big(\Psi_m  -  \mathcal{K} \big)^T \Psi_m R z\nonumber
\end{multline}
On the other hand, one has
\begin{equation}
\label{crit_quad2}
\textrm{Crit}(m) = y^T \Big ( \Psi_m^T \Psi_m  - 2 \mathcal{K}^T  \Psi_m) y + \textrm{pen}(m) \nonumber
\end{equation}
hence
\begin{multline}
\textrm{Crit}(m) = (X\beta)^T \Big ( \Psi_m^T \Psi_m  - 2 \mathcal{K}^T  \Psi_m \Big) X \beta +
z^T R^T \Big ( \Psi_m^T \Psi_m  - 2 \mathcal{K}^T  \Psi_m) R z \\
+ 2 (X\beta)^T \Big( \Psi_m^T \Psi_m  -  \mathcal{K}^T  \Psi_m - \Psi_m^T  \mathcal{K} \Big) R z + \textrm{pen}(m) \nonumber
\end{multline}
One derives that
\begin{multline}
\| \hat{\beta}_{m} - {\beta} \|^2 - \textrm{Crit}(m) = \|\beta\|^2  + 2 z^T R^T \mathcal{K}^T  \Psi_m R z
+ 2 (X\beta)^T \Psi_m^T  \mathcal{K} R z - \textrm{pen}(m) \nonumber
\end{multline}
Now, let us fix some $\theta \in (0,1)$, one finds that
\begin{multline}
(1-\theta)\| \hat{\beta}_{m} - {\beta} \|^2 - \textrm{Crit}(m) =
-\theta \beta^T \big(\Psi_m X - I_p\big)^T \big(\Psi_m X - I_p\big)  \beta + z^T R^T \Big ( 2\mathcal{K}^T  \Psi_m -\theta \Psi_m^T \Psi_m \Big ) R z \\ + 2   (X\beta)^T\Big(\Psi_m^T  \mathcal{K}  - \theta \Psi_m^T \Psi_m + \theta \mathcal{K}^T\Psi_m  \Big) R z
 + \|\beta\|^2 - \textrm{pen}(m) \nonumber
\end{multline}
hence
\begin{multline}
(1-\theta)\| \hat{\beta}_{m} - {\beta} \|^2 - \textrm{Crit}(m) =
-\theta \beta^T \Big(\Psi_m X - I_p\Big)^T \Big(\Psi_m X - I_p\Big)  \beta + z^T R^T \Big ( 2\mathcal{K}^T  \Psi_m -\theta \Psi_m^T \Psi_m \Big ) R z \\ + 2 \beta^T\Big(X^T \Psi_m^T  \mathcal{K}  - \theta X^T \Psi_m^T \Psi_m + \theta \Psi_m  \Big) R z
 + \|\beta\|^2 - \textrm{pen}(m) \nonumber
\end{multline}
and by adding and subtracting the quantity $2 \beta^T \mathcal{K} R z$ in the left side of the latter equality, one finds that
\begin{multline}
(1-\theta)\| \hat{\beta}_{m} - {\beta} \|^2 - \textrm{Crit}(m) =
-\theta \beta^T \Big(\Psi_m X - I_p\Big)^T \Big(\Psi_m X - I_p\Big)  \beta + z^T R^T \Big ( 2\mathcal{K}^T  \Psi_m -\theta \Psi_m^T \Psi_m \Big ) R z \\ - 2 \beta^T\Big(\Psi_m X - I_p\Big)^T\Big (\theta \Psi_m - \mathcal{K}\Big) R z
 + \|\beta\|^2 + 2 \beta^T \mathcal{K} R z - \textrm{pen}(m) \nonumber
\end{multline}
If now one puts $\eta_m = \big(\Psi_m X - I_p\big) \beta$ for all $m \in \mathcal{M}$, one derives
\begin{multline}
(1-\theta)\| \hat{\beta}_{m} - {\beta} \|^2 - \textrm{Crit}(m) = -\theta \|\eta_m\|^2 + z^T R^T \Big ( 2\mathcal{K}^T  \Psi_m -\theta \Psi_m^T \Psi_m \Big ) R z \\ - 2 \eta_m^T \Big (\theta \Psi_m - \mathcal{K}\Big) R z
 + \|\beta\|^2 + 2 \beta^T \mathcal{K} R z - \textrm{pen}(m) \nonumber
\end{multline}
and by definition of $\tilde{\beta} = \hat{\beta}_{\tilde{m}}$, one finds that
\begin{multline}
(1-\theta)\| \tilde{\beta} - {\beta} \|^2 = -\theta \|\eta_{\tilde{m}}\|^2 +
 z^T R^T \Big ( -\theta \Psi_{\tilde{m}}^T \Psi_{\tilde{m}} + 2\mathcal{K}^T  \Psi_m \Big ) R z \\ -
 2 \eta_{\tilde{m}}^T \Big (\theta \Psi_{\tilde{m}} - \mathcal{K}\Big) R z -  \textrm{pen}({\tilde{m}})  \\
 + \inf_{m \in \mathcal{M}} \Big\{ \|\beta\|^2 +  y^T \Big ( \Psi_m^T \Psi_m  - 2 \mathcal{K}^T  \Psi_m) y + 2 \beta^T \mathcal{K} R z + \textrm{pen}(m)    \Big\} \nonumber
\end{multline}
Since $\tilde{m}$ is random and can be any $m \in \mathcal{M}$, it follows that in order to control
$\| \tilde{\beta} - {\beta} \|^2$, one needs to control uniformly, i.e., for all $m \in \mathcal{M}$ simultaneously,
the expression
\begin{equation}
\nonumber
\Gamma_m =  z^T R^T \Big (-\theta \Psi_m^T \Psi_m + 2\mathcal{K}^T  \Psi_m  \Big ) R z \\ - 2 \eta_m^T \Big (\theta \Psi_m - \mathcal{K}\Big) R z
\end{equation}
To this end, we shall make use of concentration inequality (\ref{conc1}) of lemma (\ref{lemma_1}).
So, let us put
\begin{equation}
\mathcal{A}_m = R^T \Big (-\theta \Psi_m^T \Psi_m + \mathcal{K}^T  \Psi_m + \Psi_m^T  \mathcal{K} \Big ) R \nonumber
\end{equation}
\begin{equation}
\mathcal{B}_m = \Big (\theta \Psi_{m} - \mathcal{K}\Big) R R^T \Big (\theta \Psi_{m} - \mathcal{K}\Big)^T \nonumber
\end{equation}
and denote by $s_{m}^+$ the largest positive eigen value of the symmetric matrix $\mathcal{A}_m$, and $r_m^*$ the largest
singular value of matrix $\mathcal{B}_m$. Then, by using concentration inequality  (\ref{conc1}) of lemma (\ref{lemma_1}), 
one has for all $m \in \mathcal{M}$, with a probability larger than $1-\exp[-x_m]$, with $x_m \geq 0$ for all $m\in \mathcal{M}$, that
\begin{eqnarray}
\nonumber
Y_m &\leq& tr\big(A_m\big) + 2\sqrt{\|A_m\|^2 + 2 \eta_m^T B_m \eta_m} \sqrt{x_m} + 2 s_m^+ x_m\nonumber \\
&\leq&  tr\big(A_m\big) + 2 \|A_m\| \sqrt{x_m} + 2 \sqrt{2}\sqrt{\eta_m^T B_m \eta_m} \sqrt{x_m} + 2 s_m^+ x_m\nonumber \\
&\leq&  tr\big(A_m\big) + 2 \|A_m\| \sqrt{x_m} + \gamma_m \eta_m^T B_m \eta_m + \frac{2}{\gamma_m} x_m + 2 s_m^+ x_m\nonumber \\
&\leq&  tr\big(A_m\big) + 2 \|A_m\| \sqrt{x_m} + \gamma_m  r_m^* \|\eta_m\|^2 + 2 \big(\frac{1}{\gamma_m} + s_m^+\big) x_m\nonumber \\
\nonumber
\end{eqnarray}
for every positive number $\gamma_m$. If one takes $ r_m^* \gamma_m = \theta $, hence $\frac{1}{\gamma_m} = \frac{r_m^*}{\theta}$,
one then finds that, simultaneously for all $m\in \mathcal{M}$, with a probability larger than $1-\sum_{m\in \mathcal{M}} \exp[-x_m]$ that
\begin{multline}
(1-\theta)\| \tilde{\beta} -  {\beta} \|^2  \leq tr\big(A_{\tilde{m}}\big) + 2 \|A_{\tilde{m}} \| \sqrt{x_{\tilde{m}} } +
 2 \big(\frac{r_{\tilde{m}}^*}{\theta} + s_{\tilde{m}} ^+\big) x_{\tilde{m}}  - \textrm{pen}({\tilde{m}} ) \\
 + \inf_{m \in \mathcal{M}} \Big\{ \|\beta\|^2 +  y^T \Big ( \Psi_m^T \Psi_m  - 2 \mathcal{K}^T  \Psi_m) y + 2 \beta^T \mathcal{K} R z + \textrm{pen}(m)    \Big\}
 \nonumber
 \end{multline}
and by putting for an arbitrary positive number $\xi$: $x_m = L_m + \xi$ for all $m\in \mathcal{M}$,
one derives that, simultaneously for all $m\in \mathcal{M}$, with a probability larger than $1-\Sigma\exp[-\xi]$ that
\begin{multline}
(1-\theta)\| \tilde{\beta} -  {\beta} \|^2  \leq tr\big(A_{\tilde{m}}\big) + 2\big(\frac{r_{\tilde{m}}^*}{\theta} + s_{\tilde{m}} ^+\big) \Delta_{\tilde{m}}
+ 2 \|A_{\tilde{m}} \| \sqrt{\Delta_{\tilde{m}}+ \xi} + 2\big(\frac{r_{\tilde{m}}^*}{\theta} + s_{\tilde{m}}^+\big)\xi
 - \textrm{pen}({\tilde{m}} ) \\
 + \inf_{m \in \mathcal{M}} \Big\{ \|\beta\|^2 +  y^T \Big ( \Psi_m^T \Psi_m  - 2 \mathcal{K}^T  \Psi_m) y + 2 \beta^T \mathcal{K} R z + \textrm{pen}(m)    \Big\}
 \nonumber
 \end{multline}
We need now to separate $\xi$ from one particular $m$ by deriving for all  $m\in \mathcal{M}$ a sharp upper bound for the expression
\begin{equation}
 2 \|A_{m} \| \sqrt{L_m+ \xi} + 2\big(\frac{r_{m}^*}{\theta} + s_{m}^+\big)\xi \nonumber
\end{equation}
To do this, we consider for  $m\in \mathcal{M}$ a positive number $h_m$, then one has
$\sqrt{L_m+ \xi} - \sqrt{L_m+ h_m} = \frac{\xi - h_m}{\sqrt{L_m+ \xi} +
\sqrt{L_m+ h_m }} \leq  \frac{\xi}{\sqrt{L_m} + \sqrt{L_m + h_m}} $. Hence,
for all $\lambda >0$
\begin{eqnarray}
 2 \|A_{m} \| \sqrt{L_m+ \xi} + 2\big(\frac{r_{m}^*}{\theta} + s_{m}^+\big)\xi &\leq&  2 \|A_{m} \|\sqrt{L_m+ h_m} \nonumber \\  &+& 2 \Big ( \frac{\|A_{m} \|}{\sqrt{L_m} + \sqrt{L_m + h_m}} +  \frac{r_{m}^*}{\theta} + s_{m}^+ \Big )\xi \nonumber \\ &\leq&  2 \|A_{m} \|\sqrt{L_m+ h_m} \nonumber
 \\&+& \lambda \Big ( \frac{\|A_{m} \|}{\sqrt{L_m} + \sqrt{L_m+ h_m}} +
  \frac{r_{m}^*}{\theta} + s_{m}^+ \Big )^2 + \frac{\xi^2}{\lambda} \nonumber
\end{eqnarray}
Now, let us put for  all  $m\in \mathcal{M}$
\begin{equation}
Q_m =  tr\big(A_{m}\big) + 2\big(\frac{r_{m}^*}{\theta} + s_{m} ^+\big) L_m + 2 \|A_{m} \|\sqrt{L_m+ h_m} + \lambda \Big ( \frac{\|A_{m}\|}{\sqrt{L_m} + \sqrt{L_m + h_m}} + \frac{r_{m}^*}{\theta} + s_{m}^+ \Big )^2 \nonumber
\end{equation}
hence with a probability  larger than $1-\Sigma \exp[-\xi]$ that
\begin{multline}
(1-\theta)\| \tilde{\beta} -  {\beta} \|^2  \leq Q_{\tilde{m}} - \textrm{pen}({\tilde{m}} ) + \frac{\xi^2}{\lambda}
\\+ \inf_{m \in \mathcal{M}} \Big\{ \|\beta\|^2 +  y^T \Big ( \Psi_m^T \Psi_m  - 2 \mathcal{K}^T  \Psi_m) y + 2 \beta^T \mathcal{K} R z + \textrm{pen}(m)    \Big\}
 \nonumber
 \end{multline}
and after integration over all values of $\xi$, one finds that
\begin{multline}
(1-\theta)\mathbb{E}\Big[\| \tilde{\beta} -  {\beta} \|^2 \Big] \leq \sup_{m\in \mathcal{M}}\Big\{ Q_{m} - \textrm{pen}(m) \Big\}
 + \frac{2\Sigma}{\lambda} \\
 + \inf_{m \in \mathcal{M}} \Big\{ \|\beta\|^2 +  \beta^T  X^T \Psi_m^T \Psi_m X \beta - 2 \beta^T \Psi_m X \beta + \|\Psi_m R\|^2 - 2 tr\Big(R^T \mathcal{K}^T \Psi_m R \Big)  +  \textrm{pen}(m) \Big\} \nonumber
 \end{multline}
since
\begin{multline}
\mathbb{E}\bigg[ \inf_{m \in \mathcal{M}} \Big\{ \|\beta\|^2 +  y^T \Big ( \Psi_m^T \Psi_m  - 2 \mathcal{K}^T  \Psi_m) y + 2 \beta^T \mathcal{K} R z + \textrm{pen}(m)  \Big\}\bigg] \\
 \leq \inf_{m \in \mathcal{M}} \bigg\{ \mathbb{E}  \Big[ \|\beta\|^2 +  y^T \Big ( \Psi_m^T \Psi_m  - 2 \mathcal{K}^T  \Psi_m) y + 2 \beta^T \mathcal{K} R z + \textrm{pen}(m)  \Big]\bigg\} \\
= \inf_{m \in \mathcal{M}} \Big\{ \|\beta\|^2 +  \beta^T  X^T \Psi_m^T \Psi_m X \beta - 2 \beta^T \Psi_m X \beta + \|\Psi_m R\|^2 - 2 tr\Big(R^T \mathcal{K}^T \Psi_m R \Big)  +  \textrm{pen}(m) \Big\}
\nonumber
\end{multline}

\end{proof}

\section*{Appendix 2 : A useful concentration inequality and its proof}
{\begin{lemma}
\label{lemma_1}
Consider the random process: $T =  z^T A z + b^T z$, where $A$ is $p$ by $p$ real square matrix,
$b$ is a $p-$dimensional real vector, and $z = (z_k)_{k=1,p}$ is a $p-$dimensional standard gaussian vector,
i.e., $z_k, k=1,p$ are i.i.d. zero-mean gaussian variables with standard deviation $1$. let us denote by $s_k, k=1,p$
the respective eigen values of the symmetric matrix $\frac{1}{2}\big(A + A^T\big)$, and let us put $s^+ = \sup\{\sup_{k=1,\cdots,p}\{s_k\}, 0\}$,
and $s^{-} = \sup\{\sup_{k=1,\cdots,p}\{-s_k\}, 0\}$. Then, the following two concentration inequalities hold true for all $x>0$
\begin{equation}
\label{conc1}
\mathbb{P}\Big [T \geq tr(A)  +  2 \sqrt{\frac{1}{4}\|A + A^T\|^2 + \frac{1}{2}\|b\|^{2}} \sqrt{x} + 2 s^{+}x \Big] \leq \exp[-x]
\end{equation}
\begin{equation}
\label{conc2}
\mathbb{P}\Big [T \leq tr(A)  -  2 \sqrt{ \frac{1}{4}\|A + A^T\|^2 + \frac{1}{2}\|b\|^{2}} \sqrt{x} - 2 s^{-}x \Big] \leq \exp[-x]
\end{equation}
\end{lemma}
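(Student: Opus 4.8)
The plan is to establish the sharper inequality (\ref{conc1}) by a Chernoff (Laplace transform) argument, and then to deduce (\ref{conc2}) by applying (\ref{conc1}) to $-T$. First I would note that only the symmetric part of $A$ enters, since $z^T A z = z^T \tfrac12(A+A^T) z$, while $tr(A)$, $\|A+A^T\|$ and the eigenvalues $s_k$ are already quantities attached to the symmetric part. Writing $S = \tfrac12(A+A^T) = U\,D\{s_k\}_{k=1,p}\,U^T$ with $U$ orthogonal, and setting $w = U^T z$ (again a standard gaussian vector) and $\tilde b = U^T b$ (so that $\|\tilde b\| = \|b\|$), the variable decouples into a sum of independent contributions $T = \sum_{k=1}^p \big(s_k w_k^2 + \tilde b_k w_k\big)$, with $\mathbb{E}[T] = \sum_k s_k = tr(A)$ and $\sum_k s_k^2 = \tfrac14\|A+A^T\|^2$.

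Next I would compute the cumulant generating function explicitly. For a single standard gaussian one has, whenever $ts_k < \tfrac12$, the elementary gaussian integral
\begin{equation}
\mathbb{E}\big[e^{t(s_k w_k^2 + \tilde b_k w_k)}\big] = (1-2ts_k)^{-1/2}\exp\Big(\frac{t^2\tilde b_k^2}{2(1-2ts_k)}\Big) \nonumber
\end{equation}
so that, putting $\psi(t) := \log\mathbb{E}\big[e^{t(T - tr(A))}\big]$ and summing over $k$,
\begin{equation}
\psi(t) = \sum_{k=1}^p\Big[-\tfrac12\log(1-2ts_k) - ts_k + \frac{t^2\tilde b_k^2}{2(1-2ts_k)}\Big], \qquad 0\le t<\tfrac{1}{2s^+} \nonumber
\end{equation}

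The heart of the argument is to dominate this by a sub-gamma profile. Using the two elementary inequalities $-\log(1-u)-u\le \tfrac{u^2}{2(1-u)}$ (valid for $0\le u<1$) and $-\log(1-u)-u\le \tfrac{u^2}{2}$ (valid for $u\le 0$), applied with $u = 2ts_k$ while distinguishing the sign of $s_k$, together with the uniform control $\tfrac{1}{1-2ts_k}\le \tfrac{1}{1-2ts^+}$ which holds for every $k$ because $s_k\le s^+$, I expect to reach
\begin{equation}
\psi(t) \le \frac{t^2\big(\sum_k s_k^2 + \tfrac12\|b\|^2\big)}{1-2ts^+}, \qquad 0\le t<\tfrac{1}{2s^+} \nonumber
\end{equation}
This is exactly the sub-gamma form $\psi(t)\le \tfrac{v t^2}{2(1-ct)}$ with variance factor $v = 2\big(\sum_k s_k^2 + \tfrac12\|b\|^2\big) = \tfrac12\|A+A^T\|^2 + \|b\|^2$ and scale $c = 2s^+$.

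Finally I would invoke the standard deviation bound for sub-gamma variables, obtained by optimizing the Chernoff inequality $\mathbb{P}(T-tr(A)\ge r)\le \exp[\psi(t)-tr]$ over $t\in[0,1/c)$, which yields $\mathbb{P}\big(T-tr(A)\ge \sqrt{2vx}+cx\big)\le e^{-x}$; substituting the above values of $v$ and $c$ gives precisely (\ref{conc1}), since $\sqrt{2vx} = 2\sqrt{\tfrac14\|A+A^T\|^2+\tfrac12\|b\|^2}\,\sqrt{x}$ and $cx = 2s^+x$. Inequality (\ref{conc2}) then follows by applying (\ref{conc1}) to $-T = z^T(-A)z + (-b)^Tz$: the symmetric part of $-A$ has eigenvalues $-s_k$, so its role-of-$s^+$ is $s^-$, while $tr(-A)=-tr(A)$ and both norms are unchanged, and rewriting the conclusion back in terms of $T$ produces (\ref{conc2}). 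The main obstacle I anticipate is the step of bounding $\psi$: one must treat the positive and the nonpositive eigenvalues with the two different logarithmic inequalities, and verify that every linear-term denominator $1-2ts_k$ is dominated by $1-2ts^+$, so that all contributions collapse into a single sub-gamma profile carrying exactly the stated constants (the case $s^+=0$ being covered automatically, as the bound then reduces to the sub-gaussian one).
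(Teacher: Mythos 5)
Your proof is correct and takes essentially the same route as the paper: diagonalize the symmetric part $\tfrac12(A+A^T)$ to reduce $T$ to a sum of independent terms $s_k w_k^2 + \tilde b_k w_k$, compute the cumulant generating function explicitly, dominate it by the sub-gamma profile with variance factor $\tfrac12\|A+A^T\|^2+\|b\|^2$ and scale $2s^+$, conclude by the optimized Chernoff bound (this is exactly the Birg\'e--Massart lemma the paper invokes), and obtain (\ref{conc2}) by applying (\ref{conc1}) to $-T$. The only cosmetic difference is that you bound the terms $-\tfrac12\log(1-2ts_k)-ts_k$ via the classical series inequalities, distinguishing the sign of $s_k$, whereas the paper verifies the same two estimates by a derivative argument in its technical appendix.
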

\begin{proof}
We already proved this lemma in \cite{bechar_2009b}, so we redo such a proof here.
We shall make use of the result of lemma (\ref{lemma_1b}) below 
to prove lemma (\ref{lemma_1}). To do this, let us consider the random process $T =  z^T A z + b^T z$, that
one can rewrite as follows: $T= \frac{1}{2} z^T \big (A+A^T\big) z + b^T z$, then by using the eigen value decomposition of
the symmetric matrix $\frac{1}{2}\big(A+A^T\big)$, one derives $T =  \sum_{k=1}^p s_k {{z'}_k^2} + {b'}_k z'$,
where $s_k, k=1,p$ stand for the respective eigen values of $\frac{1}{2}\big(A+A^T\big)$, $z' = U^T z$  with $U$
standing for the (orthonormal) eigen matrix of $\frac{1}{2}\big(A+A^T\big)$,  and $b' = U^T b$. By noticing 
that $z'$ stands for a $p-$dimensional standard gaussian vector, $\|b'\|^2 = \|b\|^2$, $\sum_{k=1}^p s_k = tr(A)$, 
and $\sum_{k=1}^p s_k^2 =  \frac{1}{4} \big\|A+A^T \big\|^2$, so by
applying lemma (\ref{lemma_1b}), the proof of lemma (\ref{lemma_1}) follows immediately.
\end{proof}

{\begin{lemma}
\label{lemma_1b}
Let $a =(a_k)_{k=1,p}$  and $b=(b_k)_{k=1,p}$ be two $p-$dimensional real
vectors, and  consider the following random expression : $T = \sum_{k=1}^p a_k z_k^2 + b_k z_k$,
where $z_k, k=1,\cdots,p$ are i.i.d. $N(0,1)$, and let us put : $a^{+} = \sup\{\sup_{k=1,\cdots,p}\{a_k\}, 0\}$,
$a^{-} = \sup\{\sup_{k=1,\cdots,p}\{-a_k\}, 0\}$. Then the following two concentration inequalities hold true for
all real positive $x$ :
\begin{equation}
\label{conc1b}
\mathbb{P}\Big [T \geq \sum_{k=1}^p a_k  +  2 \sqrt{\sum_{k=1}^p a_k^2 + \frac{b_k^2}{2}} \sqrt{x} + 2 a^{+}x \Big] \leq \exp[-x]
\end{equation}
\begin{equation}
\label{conc2b}
\mathbb{P}\Big [T \leq \sum_{k=1}^p a_k  -  2 \sqrt{\sum_{k=1}^p a_k^2 + \frac{b_k^2}{2}} \sqrt{x} - 2 a^{-}x \Big] \leq \exp[-x]
\end{equation}
\end{lemma}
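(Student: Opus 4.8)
The plan is to prove Lemma \ref{lemma_1b} by the Cram\'er--Chernoff exponential-moment method in the spirit of Laurent and Massart, the twist being that we must accommodate the linear term $b_k z_k$ and quadratic coefficients $a_k$ of arbitrary sign. First I would exploit independence: since $T = \sum_{k=1}^p (a_k z_k^2 + b_k z_k)$ is a sum of independent summands, its log-Laplace transform factorizes. For a single $z_k \sim N(0,1)$ and any $\lambda$ with $2\lambda a_k < 1$, completing the square in a Gaussian integral gives the closed form
\[
\mathbb{E}\Big[\exp\big(\lambda (a_k z_k^2 + b_k z_k)\big)\Big] = (1 - 2\lambda a_k)^{-1/2}\exp\Big(\tfrac{\lambda^2 b_k^2}{2(1 - 2\lambda a_k)}\Big).
\]
Writing $\psi(\lambda) := \log \mathbb{E}[\exp(\lambda(T - \mathbb{E}T))]$ with $\mathbb{E}[T] = \sum_k a_k$, summing the per-coordinate cumulants yields, for $0 < \lambda < 1/(2a^{+})$,
\[
\psi(\lambda) = \sum_{k=1}^p \Big[-\lambda a_k - \tfrac{1}{2}\log(1 - 2\lambda a_k) + \tfrac{\lambda^2 b_k^2}{2(1-2\lambda a_k)}\Big].
\]

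The heart of the argument is to dominate $\psi(\lambda)$ by a single Bernstein-type rational function. After the substitution $u = 2\lambda a_k$, I would use the two elementary inequalities $-\tfrac{1}{2}\log(1-u) - \tfrac{u}{2} \le \tfrac{u^2}{4(1-u)}$ for $0\le u<1$ and $-\tfrac{1}{2}\log(1-u) - \tfrac{u}{2} \le \tfrac{u^2}{4}$ for $u \le 0$, each proved by checking that the derivative of the corresponding difference keeps a constant sign and vanishes at $u=0$. These two cases are exactly what let me treat $a_k \ge 0$ and $a_k < 0$ on an equal footing: in either case the quadratic-coefficient contribution is bounded by $\lambda^2 a_k^2/(1 - 2\lambda a^{+})$, and since $1 - 2\lambda a_k \ge 1 - 2\lambda a^{+} > 0$ for every $k$, the linear-term contribution is bounded by $\lambda^2 b_k^2/\big(2(1 - 2\lambda a^{+})\big)$. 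Summing over $k$ gives the Bernstein-type bound
\[
\psi(\lambda) \le \frac{\lambda^2}{1 - 2\lambda a^{+}}\sum_{k=1}^p\Big(a_k^2 + \tfrac{b_k^2}{2}\Big) = \frac{\lambda^2 v/2}{1 - c\lambda}, \qquad v := 2\sum_{k=1}^p\Big(a_k^2 + \tfrac{b_k^2}{2}\Big), \quad c := 2a^{+}.
\]

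Finally I would invoke the standard Chernoff step for a variance-proxy/scale bound of this form: $\mathbb{P}(T - \mathbb{E}T \ge t) \le \exp\big(-\sup_{\lambda}\{\lambda t - \psi(\lambda)\}\big)$, and the usual Legendre-transform computation for $\tfrac{\lambda^2 v/2}{1-c\lambda}$ shows that the choice $t = \sqrt{2vx} + cx$ makes the exponent equal to $x$. Substituting the values of $v$ and $c$ turns $\sqrt{2vx} + cx$ into exactly $2\sqrt{\sum_k(a_k^2 + b_k^2/2)}\,\sqrt{x} + 2a^{+} x$, which is (\ref{conc1b}). The lower tail (\ref{conc2b}) then follows immediately by applying (\ref{conc1b}) to $-T = \sum_k(-a_k)z_k^2 + (-b_k)z_k$, whose quadratic coefficients have supremum-part $a^{-}$ while $\sum_k\big((-a_k)^2 + (-b_k)^2/2\big)$ is unchanged.

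I expect the main obstacle to be the uniform sign handling of the $a_k$: the single log-inequality used in the classical chi-square argument of Laurent--Massart only covers $a_k \ge 0$, so the extra regime $a_k < 0$ must be controlled by a separate elementary bound, and one must simultaneously keep the denominator $1 - 2\lambda a_k$ of the linear term dominated by the same factor $1 - 2\lambda a^{+}$. Getting both to collapse into one clean variance proxy $v = 2\sum_k(a_k^2 + b_k^2/2)$, rather than into two sign-dependent quantities, is the delicate point; everything else is the routine Gaussian MGF computation and the textbook Chernoff optimization.
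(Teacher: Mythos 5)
Your proposal is correct and follows essentially the same route as the paper's own proof: factorize the Laplace transform by independence, compute the per-coordinate Gaussian MGF exactly, dominate the log-Laplace transform by the Bernstein-type bound $\big(\sum_k (a_k^2 + b_k^2/2)\big)\lambda^2/(1-2a^{+}\lambda)$ with separate treatment of the signs of the $a_k$, conclude via the standard Chernoff--Legendre step (the paper invokes the Birg\'e--Massart lemma where you carry out the optimization inline), and obtain the lower tail by applying the upper tail to $-T$. The only cosmetic difference is that the paper proves two tailored inequalities with the $1-2a^{+}y$ denominator built in, whereas you split this into the classical Laurent--Massart log-inequality plus the monotonicity $1-2\lambda a_k \geq 1-2\lambda a^{+}$; both yield the identical variance proxy and scale.
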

\begin{proof}
This lemma was also proven in \cite{bechar_2009b}, so we redo the proof here. We shall make use 
of lemma \ref{lemma_1c} to prove lemma (\ref{lemma_1b}). Now, to prove lemma (\ref{lemma_1b}), first, one can notice that 
concentration inequality (\ref{conc2b}) can be obtained from (\ref{conc1b}) by considering the 
random quantity
\begin{equation}
T' = -T = \sum_{k=1}^p (-a_k) z_k^2 + (-b_k) z_k \nonumber
\end{equation}
and by applying (\ref{conc1b}) on $T'$ instead of $T$. So, we need to prove only (\ref{conc1b}).
To do this, let us rewrite $T$ as follows: $ T= \sum_{k=1}^p T_k$, where $ T_k =  a_k z_k ^2 + b_k z_k$,
and let us compute $\log\big [\mathbb{E}\big (\exp(y (T - \bar{T}))) \big] $, where $\bar{T} = \sum_{k=1}^p a_k$.
We have
\begin{displaymath}
\mathbb{E}\big[\exp(y T_k)\big] = \frac{1}{\sqrt{2\pi}}\int_{-\infty}^{\infty} \exp\Big [-\frac{1}{2} ((1-2 a_k y) t^2 - 2 y b_k t) \Big] dt
\end{displaymath}

\begin{displaymath}
\nonumber
\mathbb{E}\big[\exp[y T_k]\big] = \exp\bigg[ \frac{ \frac{b_k^2}{2} y^2 }{1-2 a_k y}  \bigg] \bigg (\frac{1}{\sqrt{2\pi}}\int_{-\infty}^{\infty} \exp\bigg [-\frac{1}{2} \Big ( \sqrt{1-2 a_k y} t - \frac{ b_k y}{\sqrt{1-2 a_k y}}\Big )^2 \bigg] \mathrm{d} t \bigg )
\end{displaymath}

\begin{displaymath}
\mathbb{E}\big[\exp[y T_k]\big] = \frac{\exp\bigg[ \frac{\frac{b_k^2}{2} y^2}{1-2 a_k y} \bigg] }{\sqrt{1-2 a_k y}}
\end{displaymath}

\begin{displaymath}
\mathbb{E}\Big[\exp[y (T_k - a_k )]\Big] = \frac{\exp\bigg[ \frac{\frac{b_k^2}{2} y^2 }{1-2 a_k y} \bigg] \exp[-y a_k] }{\sqrt{1-2 a_k y}}
\end{displaymath}

\begin{displaymath}
\log \Big (\mathbb{E}\Big[\exp[y (T_k - a_k )]\Big] \Big) =  \frac{\frac{b_k^2}{2} y^2}{1-2 a_k y} - \frac{1}{2}\log \Big(1-2 a_k y\Big) - a_k y
\end{displaymath}
Then, by putting $a^{+} = \sup\big\{ \sup_{k=1,\cdots,p}\{a_k\},0 \big\}$, one derives (see the technical details below) that for all
 $0<y<\frac{1}{2a^{+}}$
\begin{displaymath}
\log \Big (\mathbb{E}\Big[\exp[y (T_k - a_k )]\Big] \Big) \leq \frac{\big (a_k^2 + \frac{b_k^2}{2}\big) y^2}{1-2 a^{+} y}
\end{displaymath}
which implies by independence that for all $0<y<\frac{1}{2a^{+}}$
\begin{displaymath}
\log \Big (\mathbb{E}\Big[\exp[y (T - \bar{T} )]\Big] \Big) \leq \sum_{k=1}^{p}  \frac{\big (a_k^2 + \frac{b_k^2}{2}\big) y^2}{1-2 a^{+} y}
\end{displaymath}

\begin{displaymath}
\log \Big (\mathbb{E}\Big[\exp[y (T - \bar{T} )]\Big] \Big) \leq   \frac{\Big (\sum_{k=1}^{p} \big(a_k^2 + \frac{b_k^2}{2}\big) \Big) y^2}{1-2 a^{+} y}
\end{displaymath}
 Finally, by applying Lemma (\ref{lemma_2}) below with $ u = \sqrt{\sum_{k=1}^{p} (a_k^2 + \frac{b_k^2}{2})} $ , and $ v = 2 a^{+}$ , one derives that for all $x>0$ :
\begin{displaymath}
\mathbb{P}\Big [ T \geq  \big [\sum_{k=1}^{p} a_k \big ] +  2 \sqrt{\sum_{k=1}^{p} \big(a_k^2 + \frac{b_k^2}{2}\big)} \sqrt{x} + 2 a^{+} x \Big ]  \leq \exp[-x]
\end{displaymath}
This terminates the proof of lemma (\ref{lemma_1b})
\end{proof}

\subsection*{Some additional technical details about the proof of lemma  (\ref{lemma_1b})}
\label{tech_det}
We will show in here that for all $r>0$, $a\geq r$, and $0<y<\frac{1}{2a}$,
one has
\begin{equation}
\label{ineq_1}
 \frac{-1}{2} \log(1 - 2ry) - ry \leq \frac{r^2 y^2}{1-2{a} y}
\end{equation}
and that for all $r\leq 0$, for all $a>0$, and for all $0<y<\frac{1}{2a}$, one has
\begin{equation}
\label{ineq_2}
 -\frac{1}{2} \log(1 - 2ry) - ry \leq \frac{r^2 y^2}{1-2{a} y}
\end{equation}
\begin{proof}
let us start by showing inequality (\ref{ineq_1}). To do this, let us consider
the following function
\begin{equation}
f_{r, {a}}(y) = -\frac{1}{2} \log(1 - 2ry) - ry - \frac{r^2 y^2}{1-2{a} y} \nonumber
\end{equation}
One first notices that $f_{r, {a}}(0) =0$, then a sufficient condition for inequality
(\ref{ineq_1}) to hold true is  that $f_{r, {a}}(y)' \leq 0$, for all $0<y<\frac{1}{2a}$.
We have
\begin{displaymath}
f_{r, {a}}(y) = \frac{-1}{2} \log(1 - 2ry) - ry + \frac{r^2y}{2{a}} + \frac{r^2}{(2{a})^2} -  \frac{\frac{r^2}{(2{a})^2}}{1-2{a} y}
\end{displaymath}
one then derives that
\begin{displaymath}
f_{r, {a}}(y)' =  \frac{r}{1 - 2ry} - r +  \frac{r^2}{2{a}} -  \frac{\frac{r^2}{2 {a}}}{(1-2{a} y)^2}
\end{displaymath}
\begin{displaymath}
f_{r, {a}}(y)' =  \frac{2 r^2 y }{1 - 2ry}  -   \frac{r^2 y }{(1-2{a} y)} - \frac{r^2 y }{(1-2{a} y)^2}
\end{displaymath}
\begin{displaymath}
f_{r, {a}}(y)' \leq  \frac{2 r^2 y }{1 - 2ry}  -   \frac{2 r^2 y }{(1-2{a} y)}
\end{displaymath}
and finally since $ \frac{1 }{(1-2{r} y)}  \leq   \frac{1}{(1-2{a} y)}$, one deduces that
\begin{displaymath}
f_{r, {a}}(y)' \leq  \frac{2 r^2 y }{(1-2{a} y)}  -   \frac{2 r^2 y }{(1-2{a} y)} = 0
\end{displaymath}
then we have shown (\ref{ineq_1}).\\
We proceed in the same way as for showing inequality (\ref{ineq_1}) to show inequality (\ref{ineq_2}).
So let us consider the following function
\begin{equation}
g_{r, {a}}(y) = \frac{-1}{2} \log(1 - 2ry) - ry - \frac{r^2 y^2}{1-2{a} y} \nonumber
\end{equation}
One first notices that $g_{r, {a}}(0) =0$, then a sufficient condition for inequality (\ref{ineq_2}) to hold
true is to that $g_{r, {a}}(y)' \leq 0$ for all $0<y<\frac{1}{2a}$. One derives that
\begin{displaymath}
g_{r, {a}}(y)' =  \frac{r}{1 - 2ry} - r +  \frac{r^2}{2{a}} -  \frac{\frac{r^2}{2 {a}}}{(1-2{a} y)^2}
\end{displaymath}

\begin{displaymath}
g_{r, {a}}(y)' =  \frac{2 r^2 y }{1 - 2ry}  -   \frac{r^2 y }{(1-2{a} y)} - \frac{r^2 y }{(1-2{a} y)^2}
\end{displaymath}
\begin{displaymath}
g_{r, {a}}(y)' \leq  \frac{2 r^2 y }{1 - 2ry}  -   \frac{2 r^2 y }{(1-2{a} y)}
\end{displaymath}
and finally, since $\frac{1}{1 - 2ry} \leq  \frac{1}{(1-2{a} y)} $, one finds that
\begin{displaymath}
g_{r, {a}}(y)' \leq  \frac{2 r^2 y }{(1-2{a} y)}  -   \frac{2 r^2 y }{(1-2{a} y)} = 0
\end{displaymath}
\end{proof}

\subsection*{Birge's \& Massart concentration inequality}
\begin{lemma}
\label{lemma_1c}
If a random variable $\xi$ satisfies for some two real positive numbers $u$ and $v$ the following inequality :
\begin{equation}
\log\Big( \mathbb{E}\Big[\exp[ y \xi ] \Big] \Big) \leq \frac{(u y)^2}{1-v y}, \textrm{for all \,\,\,} 0<y<\frac{1}{v}
\end{equation}
then
\begin{equation}
\mathbb{P}\Big[ \xi \geq 2 u\sqrt{x} + v x \Big] \leq \exp[-x], \textrm{for all \,\,\,} x>0
\end{equation}
\end{lemma}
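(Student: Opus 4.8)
The plan is to run the standard Cram\'er--Chernoff (exponential Markov) argument. First I would observe that for every $0<y<\frac{1}{v}$, Markov's inequality applied to the nonnegative random variable $\exp[y\xi]$ yields, for an arbitrary threshold $t$,
\begin{equation}
\mathbb{P}\big[\xi \geq t\big] = \mathbb{P}\big[\exp[y\xi] \geq \exp[yt]\big] \leq \exp[-yt]\,\mathbb{E}\big[\exp[y\xi]\big] \leq \exp\Big[-yt + \frac{(uy)^2}{1-vy}\Big], \nonumber
\end{equation}
where the last step invokes the hypothesis on the log-Laplace transform. Taking $t = 2u\sqrt{x} + vx$, the whole task reduces to exhibiting a single admissible value of $y$ for which $yt - \frac{(uy)^2}{1-vy} \geq x$.

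Second, rather than carrying a full optimization I would simply make the explicit choice $y = \frac{\sqrt{x}}{u + v\sqrt{x}}$, which manifestly lies in $(0, \frac{1}{v})$ since $u,v,x>0$ forces $0 < vy = \frac{v\sqrt{x}}{u+v\sqrt{x}} < 1$. With this choice one has $1-vy = \frac{u}{u+v\sqrt{x}}$, hence
\begin{equation}
\frac{(uy)^2}{1-vy} = \frac{u^2 x/(u+v\sqrt{x})^2}{u/(u+v\sqrt{x})} = \frac{ux}{u+v\sqrt{x}}, \qquad yt = \frac{\sqrt{x}\,(2u\sqrt{x}+vx)}{u+v\sqrt{x}} = \frac{x(2u+v\sqrt{x})}{u+v\sqrt{x}}. \nonumber
\end{equation}
Subtracting gives $yt - \frac{(uy)^2}{1-vy} = \frac{x(2u+v\sqrt{x}) - ux}{u+v\sqrt{x}} = \frac{x(u+v\sqrt{x})}{u+v\sqrt{x}} = x$, so that $\mathbb{P}[\xi \geq 2u\sqrt{x}+vx] \leq \exp[-x]$, which is the claim.

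The argument is essentially mechanical once the correct $y$ is in hand, so the only genuine obstacle is \emph{discovering} that value. I would locate it by carrying out the true optimization: minimizing $g(y) = -yt + \frac{(uy)^2}{1-vy}$ over $(0,\frac{1}{v})$ amounts to computing the Legendre transform of $y \mapsto \frac{(uy)^2}{1-vy}$, and solving $g'(y)=0$ singles out the stationary point $y = \frac{\sqrt{x}}{u+v\sqrt{x}}$ precisely when $t = 2u\sqrt{x}+vx$. This is no accident: the map $x \mapsto 2u\sqrt{x}+vx$ is exactly the inverse of the Cram\'er transform $t \mapsto \sup_{0<y<1/v}\{yt - \frac{(uy)^2}{1-vy}\}$ associated with the assumed bound, which is why the substitution closes to an equality rather than to a mere inequality. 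One may optionally confirm by a second-order check that the stationary point is the minimizer, but for the stated concentration bound the explicit substitution above already suffices.
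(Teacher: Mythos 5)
Your proof is correct, and it is essentially the same argument as the paper's: the paper gives no proof of Lemma \ref{lemma_1c} itself but defers to \cite{birge_massart_1998}, where the bound is established by exactly this Cram\'er--Chernoff argument. Your explicit choice $y = \frac{\sqrt{x}}{u+v\sqrt{x}}$ is the optimizing value (your stationarity check is right: $h'(y)=\frac{u^2y(2-vy)}{(1-vy)^2}=t$ at this $y$ when $t=2u\sqrt{x}+vx$), and the algebra closing $yt-\frac{(uy)^2}{1-vy}=x$ is verified.
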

The proof of this lemma can be found in  \cite{birge_massart_1998}.

\section*{Acknowledgements}
This work has been accomplished by the author jointly at the Oxford Centre for Magnetic Resonance
Imaging of Oxford University (U.K.), and at INRIA Nice Sophia Antipolis (France). The author is
very grateful to : Alain Trubuil (INRA of Jouy-en-Josas), Christine Graffigne (MAP5, Universite Paris 5),
Sylvie Huet (INRA of Jouy-en-Josas), Matthew Robson (OCMR \& FMRIB, Oxford University),
Alison Noble (BioMedIA, Oxford University) and Ilias Kylintireas (OCMR, Oxford University)
for all the helpful discussions either about statistical model selection or image processing in
video-microscopy and MRI during my two scientific stays at INRA of Jouy-en-Josas and at OCMR/FMRIB
of Oxford University respectively.

\end{document}